\documentclass[12pt,twoside]{amsart}
\usepackage[latin1]{inputenc}
\usepackage{stmaryrd}
\usepackage{times}
\usepackage{amsthm}
\usepackage{amssymb, verbatim}
\usepackage{amsmath}
\usepackage{mathrsfs}

\topmargin = - 40 pt 
\textheight = 725 pt 

\oddsidemargin = 0 pt 
\evensidemargin = 0 pt 
\textwidth      = 460 pt  

\newcommand     {\Cset}    {{\mathbb C}}
\newcommand     {\Nset}    {{\mathbb N}}
\newcommand     {\Zset}    {{\mathbb Z}}

\newcommand     {\llbr}     {{\llbracket}}
\newcommand     {\rrbr}     {{\rrbracket}}

\newcommand {\1}    {{\bf 1}}

\newcommand     {\CD}   {{\C[\partial]}}

\newcommand     {\C}    {{\Bbb C}}

\newcommand     {\Z}    {{\Bbb Z}}

\newcommand     {\End}  {\mbox{End}}

\newcommand     {\Tor}  {{\mbox{\it Tor }}}

\newcommand     {\id}   {\mbox{id}}

\newtheorem{thm}{Theorem}[section]

\newtheorem{lemma}{Lemma}[section]
\newtheorem{prop}{Proposition}[section]

\theoremstyle{definition}

\newtheorem{defn}{Definition}[section]

\newtheorem{ex}{Example}[section]

\newtheorem{rem}{Remark}[section]

\DeclareMathOperator{\gr}{gr} 
\DeclareMathOperator{\spn}{span}

\numberwithin{equation}{section}

\setcounter{tocdepth}{1}

\begin{document}

\title[Finiteness and orbifold VOAs]
{Finiteness and orbifold vertex operator algebras}
\author[A.~D'Andrea]{Alessandro D'Andrea}
\address{Dipartimento di Matematica, Universit\`a degli Studi di
Roma ``La Sapienza'', Roma}
\email{dandrea@mat.uniroma1.it}
\date{\today}

\maketitle

\tableofcontents

\section{Introduction}

It has been observed in many instances, see \cite{Linshaw} and references therein, that a strong finiteness condition on a (simple) vertex operator algebra, or VOA, is inherited by subalgebras of invariant elements under the action of a reductive (possibly finite) group of automorphisms. This amounts to a quantum version of Hilbert's basis theorem for finitely generated commutative algebras, but is typically dealt with, in the relevant examples, by means of invariant theory.

A big issue that needs to be addressed in all attempts towards proving the above statement in a general setting is its failure in the trivial commutative case. A commutative vertex algebra is nothing but a commutative differential algebra, and it has long been known that both the noetherianity claim contained in Hilbert's basis theorem, and the finiteness property of invariant subalgebras, cannot hold for differential commutative algebras. Counterexamples are easy to construct, and a great effort has been spent over the years into finding the appropriate generalization of differential noetherianity. Every investigation of finiteness of vertex algebras must first explain the role played by noncommutativity and its algebraic consequences.

In this paper, I announce some results in this direction, and claim that every strongly finitely generated simple vertex operator algebra satisfies the ascending chain condition on its right ideals. Here, a VOA is simple if it has no nontrivial quotient VOAs, whereas the right ideals involved in the ascending chain conditions are subspaces that are stable both under derivation and right multiplication with respect to the normally ordered product; even a simple VOA may have very many ideals of this sort, and they are better suited when addressing finiteness conditions. Right noetherianity of simple VOAs is the first algebraic property, as far as I know, that can be proved on a general level, and explains a first important difference between the commutative and noncommutative situation.

The paper is structured as follows: in Sections 2 and 3, I rephrase the vertex algebra structure in the context of left-symmetric algebras, and describe how the normally ordered product and the singular part of the Operator Product Expansion relate to each other. In Sections 4 and 5, I recall the concept of strong generators for a VOA, and explain its interaction with Li's filtration \cite{abelian}, and its generalization to structures that are weaker than proper VOAs. Section 6 explains the role of what I call {\em full ideals} into proving some version of noetherianity for a VOA. Speculations on how to use noetherianity in order to address strong finiteness of invariant subalgebras of a strongly finitely generated VOA are given in Section 7. I thankfully acknowledge Victor Kac for his suggestion that Lemma \ref{kac} might be useful in the study of finiteness of orbifold VOAs.

\section{What is a vertex operator algebra?}

\subsection{Left-symmetric algebras}

A {\em left-symmetric algebra} is a (complex) vector space $A$ endowed with a bilinear product $\cdot : A \otimes A \to A$ which is in general neither commutative nor associative. The associator $(a, b, c) = (ab)c - a(bc)$ must however satisfy the following left-symmetry axiom:
$$(a, b, c) = (b, a, c),$$
for every choice of $a, b, c \in A$. One may similarly define {\em right-symmetric algebras} by requiring that $(a, b, c) = (a, c, b)$. Clearly, an associative algebra is both left- and right-symmetric.

If $A$ is any (non-commutative, non-associative) algebra, the commutator $[a, b] = ab - ba$ satisfies
$$[a, [b, c]] = [[a, b], c] + [b, [a, c]] + (b, a, c) - (a, b, c) - (c, a, b) + (a, c, b) + (c, b, a) - (b, c, a),$$
for all $a, b, c \in A$. When $A$ is either left- or right-symmetric, this reduces to the ordinary Jacobi identity
$$[a, [b, c]] = [[a, b], c] + [b, [a, c]],$$
and the commutator thus defines a Lie bracket on $A$. In a left-symmetric algebra, commutativity implies associativity, as
\begin{equation}\label{commass}
(a, b, c) = [c, a]b + a[c, b] - [c, ab].
\end{equation}
A similar identity holds in the right-symmetric case.

\subsection{Differential graded left-symmetric algebras}

A {\em differential graded left-symmetric algebra} (henceforth, a {\em DGLsA}) is a non-negatively graded vector space $A = \oplus_{n \geq 0} A^n$, endowed with a unital left-symmetric product $\cdot: A \otimes A \to A$, and a derivation $\partial: A \to A$, satisfying:
\begin{itemize}
\item $\1 \in A^0;$
\item $A^m \cdot A^n \subset A^{m+n};$
\item $\partial A^n \subset A^{n+1}.$
\end{itemize}
Throughout the paper, we will assume all $A^n$ to be finite-dimensional vector spaces.

\begin{ex}
Let $A = \Cset[x]$, and set $\partial = x^2 d/dx$. If we choose $x$ to have degree $1$, then $A$ is a differential graded commutative algebra, hence also a DGLsA.
\end{ex}

\subsection{Lie conformal algebras}

A {\em Lie conformal algebra} is a $\Cset[\partial]$-module $L$ endowed with a $\lambda$-bracket
$$R \otimes R \ni a \otimes b \mapsto [a_\lambda b] \in R[\lambda]$$
satisfying
\begin{itemize}
\item $[\partial a_\lambda b] = - \lambda [a_\lambda b], \qquad [a_\lambda \partial b] = (\partial + \lambda)[a_\lambda b];$
\item $[a_\lambda b] = - [b_{-\partial - \lambda} a];$
\item $[a_\lambda[b_\mu c]] - [b_\mu [a_\lambda c]] = [[a_\lambda b]_{\lambda+ \mu} c],$
\end{itemize}
whenever $a, b, c \in R$. Lie conformal algebras have been introduced in \cite{K} and studied in \cite{DK} in order to investigate algebraic properties of local families of formal distributions. This notion, and its multi-variable generalizations, are deeply related to linearly compact infinite-dimensional Lie algebras and their representation theory.

\subsection{Vertex algebras}

Let $V$ be a complex vector space. A {\em field} on $V$ is a formal power series $\phi(z)
\in (\End V)[[ z, z^{-1}]]$ with the property that $\phi(z)v \in
V((z)) = V[[z]][z^{-1}],$ for every $v \in V$. In other words, if
$$\phi(z) = \sum_{i \in \Z} \phi_i z^{-i-1}$$ then $\phi_n(v) = 0 $
for sufficiently large $n$.

A {\em vertex algebra} is a (complex) vector space $V$ endowed with a
linear {\em state-field correspondence} $Y:V \to (\End V)[[z,
z^{-1}]]$, a {\em vacuum element} $\1$ and a linear {\em (infinitesimal) translation operator}
$\partial \in \End V$ satisfying the following properties:
\begin{itemize}
\item $Y(v,z)$ is a field for all $v\in V$. \hfill {\em (field axiom)}
\item For every $a, b \in V$ one has
$$(z-w)^N[Y(a,z), Y(b,w)] = 0$$ for sufficiently large $N$. \hfill {\em (locality)}
\item The vacuum element $\1$ is such that
$$Y(\1,z) = \id_V,\qquad Y(a,z)\1 \equiv a \mod
zV[[z]],$$ for all $a \in V$. \hfill {\em (vacuum axiom)}
\item $\partial$ satisfies $$[\partial, Y(a,z)]
= Y(\partial a, z) = \frac{d}{dz}Y(a,z),$$ for all $a\in V$. \hfill {\em (translation invariance)}
\end{itemize}

One usually writes
$$Y(a,z) = \sum_{j\in \Z} a_{(j)} z^{-j-1}.$$
and views the $\C$-bilinear maps $a \otimes b \mapsto
a_{(j)} b, j \in \Z,$ as products describing the vertex algebra
structure. The normally ordered product $ab = \,:\!\!ab\!\!:\,= a_{(-1)} b$ determines all negatively labeled
products as
$$j!\, a_{(-j-1)} b = (\partial^j a)_{(-1)} b.$$
Non-negatively labeled products can
be grouped in a generating series
$$[a _\lambda b] = \sum_{n \geq 0} \frac{\lambda^n}{n!} a_{(n)} b,$$
which can be showed to define a Lie conformal algebra structure.
The compatibility conditions between the normally ordered product and the $\lambda$-bracket are well understood \cite{BK,DsK}, and amount to imposing quasi-commutativity
\begin{equation}\label{quasicomm}
[a,b] = \int_{-\partial}^0 d\lambda\, [a_\lambda b] ,
\end{equation}
and the noncommutative Wick formula
\begin{equation}\label{wick}
[a_\lambda bc] = [a_\lambda b]c + b [a_\lambda c] + \int_0^\lambda d\mu\, [[a_\lambda b]_\mu c].
\end{equation}

As a consequence, the normally ordered product may fail to be associative. The associator $(a, b, c) := (ab)c - a(bc)$ can be expressed in the form
\begin{equation}\label{assoc}
(a, b, c) = \left( \int_0^\partial d\lambda\, a\right) [b_\lambda c] + \left( \int_0^\partial d\lambda\, b\right) [a_\lambda c],
\end{equation}
hence it satisfies $(a, b, c) = (b, a, c)$. $V$ is therefore a left-symmetric algebra with respect to its normally ordered product. Because of \eqref{commass} and \eqref{quasicomm}, one obtains commutativity and associativity of the normally ordered product as soon as the $\lambda$-bracket vanishes. The operator $\partial$ is a derivation of all products. As the normally ordered product is non-associative, we will denote by $:a_1 a_2 \dots a_n:$ the product $a_1(a_2(\dots (a_{n-1}a_n)\dots))$ obtained by associating on the right.

\subsection{Vertex operator algebras}

In this paper, a {\em vertex operator algebra} (henceforth, a {\em VOA}) is a non-negatively graded vector space $V = \bigoplus_{n \geq 0} V^n$, endowed with a vertex algebra structure such that
\begin{itemize}
\item
The normally ordered product and translation operator $\partial$ make $V$ into a DGLsA;
\item
$\Tor(V) = V^0 = \Cset \1$;
\item
There exists a {\em Virasoro element} --- i.e., an element $\omega \in V^2$ satisfying $$[\omega_\lambda \omega] = (\partial + 2\lambda)\omega + \frac{c}{12}\lambda^3\1,$$ for some $c \in \Cset$ --- such that
$[\omega_\lambda a] = (\partial + n\lambda)a + O(\lambda^2)$, for all $a \in V^n$.
\end{itemize}
As a consequence, $V^i \,_{(n)} V^j \subset V^{i + j - n - 1}$, $\partial V^i \subset V^{i+1}$. By $\Tor V$, I mean the torsion of $V$ when viewed as a $\Cset[\partial]$-module.

\section{Interaction between normally ordered product and $\lambda$-bracket}
As the structure of a vertex algebra is described by the normally ordered product, along with the $\lambda$-bracket, it is interesting to figure out how much each of the two products determines the other.

\subsection{The normally ordered product of a VOA determines the $\lambda$-bracket} 
\label{determines}
We know that the $\lambda$-bracket of a vertex algebra $V$ is polynomial in $\lambda$, and determines the commutator of elements as in \eqref{quasicomm}. If we choose elements $c_j \in V$ so that
$$[a_\lambda b] = \sum_{j=0}^n \lambda^j c_j,$$
then we may compute
$$[\partial^i a, b] = (-1)^i \cdot \sum_{j=0}^n \int_{-\partial}^0  \lambda^{i+j}c_j d\lambda = \sum_{j=0}^n (-1)^j \frac{\partial^{i+j+1}c_j}{i+j+1},$$
hence
\begin{equation}\label{hilbert}
\partial^{n-i}[\partial^ia, b] = \sum_{j=0}^n \frac{(-1)^j}{i+j+1} \cdot \partial^{n+j+1} c_j.
\end{equation}

As soon as we are knowledgeable about the normally ordered product of the vertex algebra $V$, we are able to compute the left-hand side of \eqref{hilbert} for every $i = 0, \dots, n$; as coefficients of the right-hand sides form a non-degenerate matrix, we can then solve \eqref{hilbert} as a system of linear equations, and recover uniquely the values of $\partial^{n+j+1}c_j, j=0, \dots, n$. In other words, the normally ordered product determines each coefficient $c_j$ up to terms killed by $\partial^{n+j+1}$.\\

We have already seen that every VOA is a DGLsA with respect to its normally ordered product. 
\begin{thm}
A DGLsA structure may be lifted to a VOA structure in at most one way. 
\end{thm}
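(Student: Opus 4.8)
The plan is to reduce the statement to the uniqueness of the $\lambda$-bracket. Fix a DGLsA $(A,\cdot,\partial)$ and suppose it underlies two VOA structures. In a vertex algebra the products $a_{(n)}b$ with $n\geq 0$ are repackaged into the $\lambda$-bracket, while those with $n<0$ are recovered from $\cdot$ and $\partial$ through $j!\,a_{(-j-1)}b=(\partial^j a)\cdot b$; moreover $\1$ is the unit of $\cdot$. Hence the two structures coincide exactly when their $\lambda$-brackets do, and it suffices to prove $B\equiv0$, where $B_\lambda(a,b)=[a_\lambda b]-[a_\lambda b]'$ is the difference of the two $\lambda$-brackets.

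\emph{Step 1: localizing the indeterminacy.} Let $a\in A^p$, $b\in A^q$ be homogeneous. The argument of Subsection~\ref{determines} applies to either bracket, so by \eqref{hilbert} and the non-degeneracy of the associated Hilbert-type matrix every coefficient of $B_\lambda(a,b)$ is annihilated by a positive power of $\partial$, hence lies in $\Tor A=A^0=\Cset\1$. On the other hand the grading $a_{(n)}b\in A^{p+q-n-1}$ places the coefficient of $\lambda^n$ of any $\lambda$-bracket of $a$ and $b$ into $A^{p+q-1-n}$. Since $A^{p+q-1-n}\cap\Cset\1=0$ unless $n=p+q-1$, and since the vacuum axiom forces $B_\lambda(a,b)=0$ whenever $p=0$ or $q=0$, I get
\begin{equation*}
B_\lambda(a,b)=\gamma(a,b)\,\lambda^{p+q-1}\1
\end{equation*}
for a scalar $\gamma(a,b)$, bilinear in $a$ and $b$ and vanishing unless $a,b$ both have positive degree. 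In particular $B$ takes its values in $\Cset[\lambda]\1$.

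\emph{Step 2: eliminating $\gamma$ via the Wick formula.} I write the noncommutative Wick formula \eqref{wick} for both structures with the same arguments $a,b,c$ --- legitimate, as $\cdot$ is shared --- and subtract. Because $B$ is $\Cset[\lambda]\1$-valued and $[\1_\mu c]'=0$, the correction $[(B_\lambda(a,b))_\mu c]'$, which would otherwise make the difference nonlinear in $B$, disappears, and there remains
\begin{equation*}
B_\lambda(a,bc)=B_\lambda(a,b)\,c+b\,B_\lambda(a,c)+\int_0^\lambda d\mu\,B_\mu([a_\lambda b],c).
\end{equation*}
Now take $a\in A^p$, $b\in A^q$ homogeneous with $p,q\geq1$ and specialize $c:=b$. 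Using that $\1$ is a unit, the first two terms on the right contribute $2\gamma(a,b)\,\lambda^{p+q-1}b$, whereas $B_\lambda(a,b^2)$ and the integral term are both proportional to $\lambda^{p+2q-1}\1$; comparing the coefficients of $\lambda^{p+q-1}$, and using $p+2q-1>p+q-1$, yields $\gamma(a,b)\,b=0$, hence $\gamma(a,b)=0$. By bilinearity $\gamma\equiv0$, so $B\equiv0$ and the two VOA structures agree.

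The crux is the passage between the two steps. Quasi-commutativity \eqref{quasicomm}, the identity underlying Subsection~\ref{determines}, cannot detect the residual freedom: it lives in $\Cset\1\subset\ker\partial$, and $\int_{-\partial}^0 d\lambda\,\lambda^{p+q-1}\1$ is annihilated by $\partial\1=0$. What is needed is a relation between the $\lambda$-bracket and $\cdot$ that survives $\partial$; the noncommutative Wick formula is such a relation, and the rigidity of the conformal grading $a_{(n)}b\in A^{p+q-n-1}$ is precisely what forces the surviving scalar to vanish once the formula is tested on $a,b,b$.
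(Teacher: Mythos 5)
Your proof is correct. Step 1 coincides with the paper's first step: quasi-commutativity \eqref{quasicomm} applied to $[\partial^i a,b]$ yields the nondegenerate system \eqref{hilbert}, so the difference of the two brackets has coefficients in $\Tor V=\Cset\1$; your additional use of the conformal grading to pin the ambiguity down to the single monomial $\gamma(a,b)\lambda^{p+q-1}\1$ is a refinement the paper does not make explicit but which follows at once from $V^i\,_{(n)}V^j\subset V^{i+j-n-1}$. Where you genuinely diverge is in killing the residual central term. The paper uses the associator identity \eqref{assoc} with an auxiliary element $u$ chosen $\Cset[\partial]$-linearly independent from $a$, which forces a case analysis (torsion elements, $\Cset[\partial]$-rank one) and a final polarization step; you instead subtract the two instances of the noncommutative Wick formula \eqref{wick}, observe that the quadratic correction $[B_\lambda(a,b)_\mu c]'$ vanishes because $B$ is $\Cset[\lambda]\1$-valued and $\1$ is central, and then specialize $c=b$ so that the term $2\gamma(a,b)\lambda^{p+q-1}b$ is the only contribution in degree $q>0$ to the coefficient of $\lambda^{p+q-1}$ (the left-hand side and the integral term living in $\lambda^{p+2q-1}\Cset\1$). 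This is uniform in $a$ and $b$, needs no auxiliary element and no rank hypothesis, and the exceptional cases are absorbed by the vacuum axiom. The trade-off is that you invoke the full Wick formula where the paper only needs the associator formula \eqref{assoc}, i.e.\ the $\lambda=\mu=0$ shadow of the compatibility conditions; both are part of the standard vertex algebra package cited from \cite{BK,DsK}, so nothing is lost, and your argument is arguably the cleaner of the two.
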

\begin{proof}
It is enough to show that the normally ordered product uniquely determines the $\lambda$-bracket. Let $a\in V^h, b \in V^k$. Then $[a_\lambda b]$ is a polynomial in $\lambda$ of degree at most $n=h+k-1$. Proceeding as above, we may determine all of its coefficients up to terms killed by some power of $\partial$. However, $\Tor V = \Cset \1$, so $[a_\lambda b]$ is determined up to multiples of $\1$.
By \eqref{assoc}, we have
$$(a, u, b) = \left( \int_0^\partial d\lambda\, a\right) [u_\lambda b] + \left( \int_0^\partial d\lambda\, u\right) [a_\lambda b].$$
If we choose $u$ so that $u, a$ are $\Cset[\partial]$-linearly independent, we may now determine unknown central terms in $[a_\lambda b]$.

Such a choice of $u$ is always possible, as we may assume without loss of generality that $a\notin \Tor V$, otherwise $[a_\lambda b] = 0$ \cite{DK}; we may also assume that $V$ has rank at least two, otherwise, if $a$ is non-torsion, unknown central terms in $[a_\lambda a]$ can  be computed using 
$$(a, a, a) = 2 \left( \int_0^\partial d\lambda\, a\right) [a_\lambda a].$$
The value of $[a_\lambda a]$ now uniquely determines the Lie conformal algebra structure.
\end{proof}

\subsection{The $\lambda$-bracket determines vertex algebra ideals}

If $A$ and $B$ are subsets of $V$, define products
\begin{eqnarray*}
A \cdot B\,\, & = & \spn_\Cset \langle a_{(n)}b \,|\, a\in A, b \in B, n \in \Zset\},\\
\llbr A,B\rrbr & = & \spn_\Cset \langle a_{(n)}b \,|\, a\in A, b \in B, n\geq 0\}.
\end{eqnarray*}
If $B$ is a $\CD$-submodule of $V$, then $A \cdot B, \llbr A, B\rrbr$ are also $\CD$-submodules. If $A, B$ are both $\CD$-submodules of $V$, then $A \cdot B = B \cdot A, \llbr A, B\rrbr = \llbr B, A\rrbr$. A $\CD$-submodule $I \subset V$ is a {\em vertex algebra ideal} if $I \cdot V \subset I$; it is a {\em Lie conformal algebra ideal} if $\llbr I, V \rrbr \subset I$. An element $a \in V$ is {\em central} if $\llbr a, V\rrbr = 0$.
\begin{lemma}[\cite{simple, varna}]\label{kac}
If $B, C \subset V$ are $\CD$-submodules, then $\llbr A, B\rrbr \cdot C \subset \llbr A, B \cdot C\rrbr$. In particular, if $X$ is a subset of $V$, then $\llbr X, V\rrbr$ is an ideal of $V$.
\end{lemma}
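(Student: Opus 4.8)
The plan is to deduce the inclusion from the Borcherds (Jacobi) identity for vertex algebras. Since $\llbr A,B\rrbr$ is by definition spanned by the elements $a_{(n)}b$ with $a\in A$, $b\in B$, $n\ge 0$, and since $j!\,(a_{(n)}b)_{(-j-1)}c=\bigl(\partial^{\,j}(a_{(n)}b)\bigr)c$ with $\partial^{\,j}(a_{(n)}b)$ still lying in the $\CD$-module $\llbr A,B\rrbr$, it is enough to prove $(a_{(n)}b)_{(m)}c\in\llbr A,B\cdot C\rrbr$ for all $n\ge 0$ and all $m\ge -1$. For $n\ge 0$ the Borcherds identity gives $(a_{(n)}b)_{(m)}c=\sum_{j\ge 0}(-1)^{j}\binom{n}{j}\bigl(a_{(n-j)}(b_{(m+j)}c)-b_{(m+j)}(a_{(n-j)}c)\bigr)$, and in each summand of the first type the index $n-j$ is $\ge 0$ while $b_{(m+j)}c\in B\cdot C$, so that summand already lies in $\llbr A,B\cdot C\rrbr$ by definition. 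The whole burden is thus the ``commutator'' terms $b_{(s)}(a_{(t)}c)$ with $t=n-j\ge 0$, $s=m+j\ge -1$.

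For these I would split on the sign of $s$. If $s\ge 0$, then $a_{(t)}c\in\llbr A,C\rrbr$ and the Lie conformal algebra Jacobi identity (the third axiom) gives $b_{(s)}(a_{(t)}c)=a_{(t)}(b_{(s)}c)-\sum_{i=0}^{t}\binom{t}{i}(a_{(i)}b)_{(s+t-i)}c$: the first term is in $\llbr A,B\cdot C\rrbr$ because $b_{(s)}c\in\llbr B,C\rrbr\subset B\cdot C$ and $t\ge 0$, while each remaining term $(a_{(i)}b)_{(s+t-i)}c$ again has both indices $\ge 0$. If $s=-1$, then $b_{(-1)}(a_{(t)}c)=b\,(a_{(t)}c)$ is a normally ordered product; quasi-commutativity \eqref{quasicomm} rewrites it as $(a_{(t)}c)_{(-1)}b+\int_{-\partial}^{0}[b_{\lambda}(a_{(t)}c)]\,d\lambda$, the integral being a $\CD$-combination of terms of the kind just treated, and to $(a_{(t)}c)_{(-1)}b$ the Borcherds identity applies once more (the inner index $t$ being $\ge 0$); after discarding the terms manifestly in $\llbr A,B\cdot C\rrbr$ one is left with terms $c_{(s')}(a_{(t')}b)$ ($t'\ge 0$, $s'\ge -1$) and, through one further use of quasi-commutativity, with products of the same shape as the one we started with.

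The delicate point --- and, I expect, the only genuinely hard one --- is that these reductions do not obviously terminate: modulo $\llbr A,B\cdot C\rrbr$ they simply replace an expression $(a_{(n)}b)_{(m)}c$ by a combination of expressions of the very same shape, and a short computation with binomial coefficients shows that the purely $\lambda$-bracket part of the recursion is in fact tautological. The missing ingredient is the grading. With $a,b,c$ homogeneous, the property that $u_{(p)}v\in V^{i+j-p-1}$ for $u\in V^{i}$, $v\in V^{j}$ forces every expression produced in the reduction of $(a_{(n)}b)_{(m)}c$ into the single graded component $V^{\deg a+\deg b+\deg c-n-m-2}$, with the sum of the two indices preserved, so that only finitely many such expressions ever occur; one then runs an induction --- for instance simultaneously on this degree and on the inner index --- whose base cases are controlled by $\Tor V=V^{0}=\Cset\1$ together with the vacuum relations $b_{(k)}\1=0$ for $k\ge 0$ (when a non-negative product reaches $V^{0}$ the corresponding commutator term vanishes or is a multiple of $\1$), and the finite linear system that survives in $V/\llbr A,B\cdot C\rrbr$ is then seen to be trivial. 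Setting this induction up cleanly is the heart of the matter; everything else is a mechanical application of the Borcherds and Jacobi identities and of quasi-commutativity.

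The final assertion follows at once: taking $A=X$ and $B=C=V$ and using that $\1\in V$, hence $V\cdot V=V$, the first part gives $\llbr X,V\rrbr\cdot V\subset\llbr X,V\cdot V\rrbr=\llbr X,V\rrbr$; and, being a $\CD$-submodule of $V$, $\llbr X,V\rrbr$ is therefore a vertex algebra ideal.
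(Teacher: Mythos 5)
The paper itself does not prove this lemma --- it cites \cite{simple,varna} --- so your argument has to stand on its own, and it does not close. You correctly reduce the problem to showing $(a_{(n)}b)_{(m)}c\in\llbr A,B\cdot C\rrbr$ for $n\ge0$, correctly isolate the troublesome terms $b_{(s)}(a_{(t)}c)$, and, crucially, you yourself verify that feeding these back through the commutator/Jacobi identities returns the tautology $(a_{(n)}b)_{(m)}c\equiv(a_{(n)}b)_{(m)}c$. At that point the proof stops: the paragraph beginning ``The missing ingredient is the grading'' describes a hoped-for induction rather than giving one. Worse, the grading cannot be the missing ingredient. Writing $u_i$ for the class of $(a_{(i)}b)_{(n+m-i)}c$ modulo $\llbr A,B\cdot C\rrbr$, the only constraints the grading adds are that $u_i=0$ once $i\ge\deg a+\deg b$ and that all $u_i$ lie in one graded component; the linear system itself remains the identity map, so the unknowns in the range $0\le i<\deg a+\deg b$ are completely unconstrained and there is no reason for them to vanish. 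A further warning sign: the lemma holds for arbitrary vertex algebras, where there is no grading and no hypothesis $\Tor V=\Cset\1$, so no correct proof can hinge on those.

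The input you are missing is the truncation (field) axiom, exploited through the full Borcherds identity rather than only its specialization with first parameter $0$. For fixed $a,c$ choose $m\gg0$ so that $a_{(m+j)}c=0$ for all $j\ge0$; then
$$\sum_{j\ge0}\binom{m}{j}(a_{(n+j)}b)_{(m+k-j)}c=\sum_{j\ge0}(-1)^j\binom{n}{j}\Bigl(a_{(m+n-j)}(b_{(k+j)}c)-(-1)^nb_{(n+k-j)}(a_{(m+j)}c)\Bigr)$$
has vanishing commutator terms, and its right-hand side lies in $\llbr A,B\cdot C\rrbr$ because $m+n-j\ge m\ge0$ and $b_{(k+j)}c\in B\cdot C$. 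A downward induction on $n$, starting from the largest $n$ with $a_{(n)}b\ne0$ (where the left-hand side collapses to the single term $(a_{(n)}b)_{(m+k)}c$, with $k$ arbitrary), then gives $(a_{(n)}b)_{(l)}c\in\llbr A,B\cdot C\rrbr$ for all $n\ge0$ and all $l\in\Zset$, which is exactly what your first reduction required. Your final paragraph deducing the ``in particular'' statement from the inclusion is fine as written.
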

This observation has an immediate drawback: every vertex algebra $V$ is in particular a Lie conformal algebra. If $I$ is an ideal of this Lie conformal algebra structure, then $J = \llbr I, V\rrbr \subset I$ is an ideal of the vertex algebra $V$, which is certainly contained in $I$. The induced $\lambda$-bracket on the quotient $I/J$ is trivial. We may rephrase this by saying that every Lie conformal algebra ideal of $V$ sits centrally on a vertex algebra ideal. In the case of a VOA, a stronger statement holds:
\begin{thm}
Let $V$ be a VOA. 
A subspace $\1 \notin I\subset V$ is an ideal for the vertex algebra structure if and only if it is an ideal of the underlying Lie conformal algebra.
\end{thm}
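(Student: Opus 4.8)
The plan is to establish the nontrivial implication; the other direction is immediate. Every product $a_{(n)}b$ with $n\geq 0$ appearing in $\llbr I,V\rrbr$ is among the products $a_{(n)}b$, $n\in\Zset$, that span $I\cdot V$, so a $\CD$-submodule $I$ with $I\cdot V\subset I$ automatically satisfies $\llbr I,V\rrbr\subset I$; here no assumption on the vacuum is needed. For the converse I want to bootstrap an ideal $I$ of the underlying Lie conformal algebra, subject to $\1\notin I$, up to a genuine vertex algebra ideal, and the only tool that upgrades bracket-type data to such an inclusion is Lemma \ref{kac}.

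First I would record that $I$ is graded. Since $\omega\in V$, we have $\omega_{(1)}a\in\llbr V,I\rrbr=\llbr I,V\rrbr\subset I$ for every $a\in I$, so $L_0:=\omega_{(1)}$ preserves $I$. But the VOA axiom $[\omega_\lambda a]=(\partial+n\lambda)a+O(\lambda^2)$ says precisely that $L_0$ acts as the scalar $n$ on $V^n$, hence $L_0$ is diagonalizable on $V$ with eigenspace decomposition $V=\bigoplus_{n\geq 0}V^n$. An $L_0$-stable subspace is therefore the sum of its intersections with the eigenspaces, so $I=\bigoplus_{n\geq 0}(I\cap V^n)$. As $I\cap V^0\subset V^0=\Cset\1$ and $\1\notin I$, this forces $I\cap V^0=0$, so in fact $I=\bigoplus_{n\geq 1}(I\cap V^n)$. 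This is the only place the hypothesis on $\1$ enters, and it is genuinely necessary: $\Cset\1$ is central, hence a Lie conformal algebra ideal, yet $\Cset\1\cdot V=V$, so it is not a vertex algebra ideal.

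The key step is to upgrade the inclusion $\llbr I,V\rrbr\subset I$ to an equality. For homogeneous $a\in I\cap V^n$ with $n\geq 1$ one has $a=\tfrac1n\,\omega_{(1)}a\in\llbr V,I\rrbr=\llbr I,V\rrbr$; summing over $n\geq 1$ and invoking the grading from the previous paragraph gives $I\subset\llbr I,V\rrbr$, hence $I=\llbr I,V\rrbr$. Now apply Lemma \ref{kac} with $A=I$ and $B=C=V$ (both $\CD$-submodules): since $V\cdot V\subset V$, it yields
$$I\cdot V=\llbr I,V\rrbr\cdot V\subset\llbr I,V\cdot V\rrbr\subset\llbr I,V\rrbr=I,$$
so $I$ is a vertex algebra ideal.

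The conceptual heart of the argument — and the point I expect to be the main obstacle — is the identity $I=\llbr I,V\rrbr$, which is exactly where the conformal grading and the exclusion of the vacuum conspire to show that the ideal is reconstructed from its non-negative (singular OPE) products alone; after that, Lemma \ref{kac} converts ``stable under the singular OPE'' into ``stable under the normally ordered product'' for free. Everything else is routine bookkeeping with the definitions of $A\cdot B$ and $\llbr A,B\rrbr$ and the $\CD$-module symmetry of these products.
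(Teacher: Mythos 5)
Your proof is correct and follows essentially the same route as the paper's: show $I$ is graded via the action of $\omega_{(1)}=L_0$, use $\1\notin I$ to exclude the degree-zero part, recover $I=\llbr I,V\rrbr$ from $a=\tfrac1n\,\omega_{(1)}a$, and conclude with Lemma \ref{kac}. You merely spell out the eigenspace decomposition and the application of the lemma in more detail than the paper does.
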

\begin{proof}
The grading of $V$ is induced by the Virasoro element $\omega$. As $\llbr I, \omega \rrbr \subset \llbr I, V\rrbr \subset I$, $I$ must contain all homogeneous components of each of its elements. However, if $a \in V$ is a homogeneous element (of nonzero degree), then $a \in \llbr a, \omega\rrbr$. This forces $I$ to equal $\llbr I, V\rrbr$, which is a vertex algebra ideal.
\end{proof}
\begin{rem}
Notice that $\Cset \1$ is always a Lie conformal algebra ideal of $V$, but is never an ideal of the vertex operator algebra structure.
\end{rem}

\subsection{Different notions of ideal in a vertex algebra}

A vertex algebra structure is made up of many ingredients, that may stand by themselves to provide meaningful concepts. In particular, a vertex algebra is naturally endowed with both a (differential) left-symmetric product, and a Lie conformal algebra structure, and we may consider ideals with respect to each of the above structures. To sum it up, we have
\begin{itemize}
\item {\em Vertex algebra ideals}: ideals of the vertex algebra structure --- closed under $\partial$, $:ab:$, $[a_\lambda b]$;
\item {\em Lie conformal algebra ideals:} ideals of the Lie conformal algebra structure --- closed under $\partial, [a_\lambda b]$;
\item {\em DLs ideals:} ideals of the differential left-symmetric structure --- closed under $\partial$, $:ab:$.
\end{itemize}
When $V$ is a VOA, we have seen that the first two notions (more or less) coincide. In what follows, we will mostly be concerned with simple VOAs, i.e., VOAs with no nontrivial vertex ideals. Notice that even a simple VOA does possess many DLs ideals. Both the normally ordered product and the differential $\partial$ increase the grading, so that if $a \in V^h$, then the DLs ideal generated by $a$ is contained in $\oplus_{n \geq h} V^n$.

We conclude that the only nontrivial concept in a simple VOA is that of DLs ideal; thus, the term {\em ideal} will henceforth refer to DLs ideals alone. Notice that we may distinguish between left, right and two-sided ideals, whereas vertex algebra and Lie conformal algebra ideals are always two-sided.

\section{Finiteness of VOAs}

\subsection{Strong generators of a VOA}

When dealing with finiteness of vertex algebras, the notion that has naturally emerged in the (both mathematical and physical) literature depends only on the (differential) left-symmetric algebra structure.
A vertex algebra $V$ is called {\em strongly finitely generated} if there exists a finite set of generators such that normally ordered products of derivatives of the generators $\Cset$-linearly span $V$; this is equivalent to being able to choose finitely many quantum fields so that every element of $V$ can be obtained from the vacuum state by applying a suitable polynomial expression in the corresponding creation operators. This definition makes no reference whatsoever to the $\lambda$-bracket; when dealing with finiteness phenomena it is natural to only resort to concepts that are independent of the Lie conformal algebra structure.

\subsection{Hilbert's Basissatz and the fundamental theorem of invariant theory}

If $A = \oplus_{n\geq 0} A^n$ is a finitely generated commutative associative unital graded algebra, and $G$ is a reductive group acting on $A$ by graded automorphisms, then the subalgebra $A^G$ of $G$-invariants is also finitely generated. Hilbert's celebrated proof of this fact uses noetherianity of $A$ in an essential way: if $I$ is the ideal of $A$ generated by the positive degree part $A^G_+$ of $A^G$, then any finite subset of $A^G_+$ generating $I$ as an ideal is also a finite set of generators of $A^G$ as an algebra.

\subsection{Does the orbifold construction preserve finiteness of a VOA?}

It is natural to ask whether Hilbert's strategy can be extended to the wider setting of VOAs. Indeed, the mathematical and physical literature provide scattered example of strongly finitely generated (simple) VOAs for which the invariant subalgebra relative to the action of a reductive group of graded automorphisms stays strongly finitely generated. However, no general argument is known that applies to all examples.

A major difficulty in understanding the general algebraic aspect of the above phenomena depend on its failure in commutative examples. We have seen that a commutative VOA is nothing but a differential commutative associative algebra. However, it is not difficult to provide examples of differentially finitely generated commutative associative algebras whose invariant part with respect to the action of a finite group of graded automorphisms does not stay finitely generated.

The strongly finite generatedness of invariant subalgebra does therefore depend on noncommutative quantum features, and any attempt to provide a general address must address the problem of understanding why the commutative case behaves so differently.

\subsection{Failure of noetherianity in the differential commutative setting and non-finiteness of invariant subalgebras}

Consider the commutative ring $A = \Cset[u^{(n)}, n \in \Nset ]$ of polynomials in the countably many indeterminates $u^{(n)}$. Setting $\partial u^{(n)} = u^{(n+1)}$ uniquely extends to a derivation of $A$, thus making it into a differential commutative algebra.

Consider now the unique differential automorphism $\sigma$ of $A$ satisfying $\sigma(u) = -u$. Then clearly $\sigma(u^{(n)}) = -u^{(n)}$ and $\sigma(u^{(n_1)} \dots u^{(n_h)}) = (-1)^h u^{(n_1)} \dots u^{(n_h)}.$ It is not difficult to see that $A^{\langle \sigma \rangle} = \Cset[u^{(i)} u^{(j)}, i, j \in \Nset]$. However, $A^{\langle \sigma \rangle}$ admits no finite set of differential algebra generators.

\begin{rem}
If we endow $A$ with a trivial $\lambda$-bracket, then $A$ is an example of a commutative vertex algebra. Notice that setting $\deg(u^{(n)}) = n+1$ provides $A$ with a grading compatible with the vertex algebra structure. However, $A$ is not a VOA as there is no Virasoro element inducing this grading.
\end{rem}

It is easy to adapt Hilbert's argument to the differential commutative setting {\em once} noetherianity is established. An inevitable consequence of the above counterexample is that the differential commutative algebra $A$ must fail to satisfy the ascending chain condition on differential ideals. This fact has long been known \cite{Ritt}, and effort has been put into providing some weaker statement replacing and generalizing noetherianity. We recall the following classical result:
\begin{thm}[Ritt]
Let $A$ be finitely generated as a differential commutative $K$-algebra, where $K$ is a field of characteristic zero. Then $A$ satisfies the ascending chain condition on its {\bf radical} differential ideals.
\end{thm}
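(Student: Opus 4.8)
The statement is the Ritt--Raudenbush basis theorem, and the plan is to follow its classical proof. First I would reduce to the case of a free differential polynomial algebra: writing $A = R/\mathfrak a$ with $R = K\{y_1,\dots,y_m\}$, the radical differential ideals of $A$ correspond bijectively to those of $R$ containing $\mathfrak a$, so the ascending chain condition for $A$ follows from the one for $R$. The ingredient where characteristic zero is essential is the \emph{perfect ideal lemma}: for subsets $S, T \subseteq R$, writing $\{S\}$ for the smallest radical differential ideal containing $S$, one has $\{S\}\{T\} \subseteq \{ST\}$, and hence $\{S\} \cap \{T\} = \{ST\}$. This is proved by exhibiting $\{S\}$ as the result of iterating three closure operations --- generating an ideal, adjoining $\partial a$ when $a$ is present, and adjoining $a$ when some power $a^n$ is present --- and checking that multiplication by $T$ is compatible with each; the hypothesis that $K$ has characteristic zero enters through $\partial(a^n) = n\,a^{n-1}\partial a$.

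Next I would observe that the ascending chain condition on radical differential ideals of $R$ is equivalent to the assertion that every radical differential ideal equals $\{F\}$ for some finite set $F$: indeed the union of an ascending chain of radical differential ideals is again one, and any finite generating set for the union already lies in a single term of the chain. Assuming this fails, the same union argument produces a radical differential ideal $\mathfrak p$ maximal among those admitting no finite such basis, and I would show $\mathfrak p$ is prime: if $ab \in \mathfrak p$ but $a, b \notin \mathfrak p$, then $\{\mathfrak p, a\}$ and $\{\mathfrak p, b\}$ properly contain $\mathfrak p$, hence have finite bases $\{\mathfrak p, a\} = \{P\}$ and $\{\mathfrak p, b\} = \{Q\}$; but $\{\mathfrak p, a\}\{\mathfrak p, b\} \subseteq \{\mathfrak p\} = \mathfrak p$, since $\mathfrak p$ absorbs the products $\mathfrak p\cdot\mathfrak p$, $a\cdot\mathfrak p$, $\mathfrak p\cdot b$ and contains $ab$, so by the lemma $\mathfrak p = \{\mathfrak p, a\}\cap\{\mathfrak p, b\} = \{PQ\}$ with $PQ$ finite --- a contradiction.

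It then remains to derive a contradiction from the existence of a prime differential ideal $\mathfrak p \subseteq R$ with no finite basis, and this is the step where the polynomial structure of $R$ must genuinely be used. I would fix a ranking of the derivatives $y_i^{(j)}$, take a characteristic set $A_1,\dots,A_r$ of $\mathfrak p$ --- an autoreduced subset of least rank --- and let $H$ be the product of the initials and separants of the $A_i$; since a characteristic set of a prime involves no separant or initial lying in the prime, $H \notin \mathfrak p$. From the reduction (pseudo-division) algorithm, together with the fact that no nonzero element of $\mathfrak p$ is reduced with respect to a characteristic set, one obtains $\mathfrak p = [A_1,\dots,A_r] : H^\infty$, while a short computation from the perfect ideal lemma gives the splitting $\{A_1,\dots,A_r\} = \mathfrak p \cap \{A_1,\dots,A_r, H\}$. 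Since $\{\mathfrak p, H\}$ properly contains $\mathfrak p$ it has a finite basis by maximality, and combining that basis with the finite set $A_1,\dots,A_r$ through the preceding two identities is what yields a finite basis for $\mathfrak p$ --- the contradiction that finishes the proof.

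The hard part is this last step. The formal scaffolding --- reduction to $K\{y_1,\dots,y_m\}$, the perfect ideal lemma, the equivalence with the finite-basis property, and the Noetherian-induction argument that a maximal counterexample is prime --- is routine, but the prime case rests on the machinery of rankings and characteristic sets: proving that the reduction algorithm terminates, that a characteristic set of a prime detects membership in it, and carrying out with care the final extraction of a finite basis from $A_1,\dots,A_r$ and a basis of $\{\mathfrak p, H\}$. All of this is classical, due to Ritt and Raudenbush; see \cite{Ritt}.
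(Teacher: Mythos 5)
The paper does not prove this statement---it is quoted as a classical result with a reference to Ritt's book---and your outline is a correct account of the standard Ritt--Raudenbush argument: reduction to $K\{y_1,\dots,y_m\}$, the perfect ideal lemma $\{S\}\{T\}\subseteq\{ST\}$ (where characteristic zero enters), the equivalence of the ascending chain condition with the finite-basis property, Noetherian induction showing a maximal non-finitely-based radical differential ideal is prime, and the characteristic-set computation $\mathfrak p=[A_1,\dots,A_r]:H^\infty$ to dispose of the prime case. You correctly flag that the last step carries the real technical weight; as a sketch of the classical proof the proposal is accurate and complete in structure.
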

In Ritt's language, radical differential ideals are {\em perfect}, and generators of a perfect ideal as an ideal (resp. as a perfect ideal) are called strong (resp. weak) generators. The above statement claims that all perfect ideals have a finite set of weak generators, but they may well fail to have a finite set of strong generators.

Under a different meaning of weak vs. strong generators, this difference of finiteness property shows up again in the context of VOAs.

\section{An abelianizing filtration for VOAs}

The problem of finding strong generators for a VOA can be addressed by using a decreasing abelianizing filtration introduced\footnote{Li's setting is more general than ours, as the grading is only assumed to be bounded from below.} in \cite{abelian}. We recall here (a slight variant of) its definition and some of its main properties. In what follows, if $X, Y \subset A$ are subsets, we will set $AB = \spn_\Cset \langle ab|a \in A, b \in B\rangle$. Notice that $AB \neq A \cdot B$, in general.

\subsection{Li's filtration}

If $A$ is a DGLsA, set $E_n(A), n \in \Nset$ to be the linear span of all products (with respect to all possible parenthesizations) $$\partial^{d_1} a_1 \, \partial^{d_2} a_2 \, \dots \, \partial^{d_h} a_h,$$
where $a_i \in A$ are homogeneous elements, and $d_1 + \dots d_h \geq n$. Also set $E_n(A) = A$ if $n<0$. The $E_i(A), i \in \Nset$ form a decreasing sequence
$$A = E_0(A) \supset E_1(A) \supset \dots \supset E_n(A) \supset \dots$$
of subspaces of $A$, and clearly satisfy
\begin{eqnarray}
E_i(A)E_j(A)&\subset& E_{i+j}(A);\\\label{prod}
\partial \,E_i(A)&\subset& E_{i+1}(A).\label{der}
\end{eqnarray}
In particular, each $E_i(A)$ is an ideal of $A$. If $a \in E_i(A) \setminus E_{i+1}(A)$, then we will say that $a$ has rank $i$, and will denote by $[a]$ the element $a + E_{i+1}(A) \in E_i(A)/E_{i+1}(A)$.
\begin{lemma}\label{grok}
If $V$ is a VOA, then $[E_i(V), E_j(V)] \subset E_{i+j+1}(V)$ for all $i, j$.
\end{lemma}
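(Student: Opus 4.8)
The plan is to reduce the statement to a computation on single products $\partial^{d}a$ of homogeneous elements, using the fact that the commutator $[\,\cdot\,,\,\cdot\,]$ of the normally ordered product is controlled by the $\lambda$-bracket via quasi-commutativity \eqref{quasicomm}. First I would recall that $E_i(V)$ is spanned by iterated normally ordered products $\partial^{d_1}a_1\cdots\partial^{d_h}a_h$ with $\sum d_k\ge i$, and that by \eqref{der} and \eqref{prod} it suffices to estimate the rank of commutators of such spanning monomials. The key reduction is that the commutator is a derivation-like operator in each slot modulo higher-rank terms: using the left-symmetry identity \eqref{commass} together with \eqref{quasicomm}, one shows $[xy,z]-x[y,z]-[x,z]y\in E_{?}$; more precisely the associator correction terms in \eqref{commass}, being of the shape $[z,x]y$, do not by themselves raise rank, so I cannot hope to win there directly. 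Instead the gain must come entirely from the $\lambda$-bracket: by \eqref{quasicomm}, $[a,b]=\int_{-\partial}^0 d\lambda\,[a_\lambda b]=-\sum_{j\ge 0}\frac{(-1)^j}{j+1}\partial^{j+1}([a_\lambda b]\text{'s }j\text{-th coeff})$, so $[a,b]$ always lies in $\partial V$, i.e.\ carries at least one extra derivative compared to the ``expected'' degree-count of $ab$.

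So the heart of the argument is the base case: for homogeneous $a,b\in V$ and any $d,e\ge 0$, I claim $[\partial^d a,\partial^e b]\in E_{d+e+1}(V)$. Indeed $[\partial^d a,\partial^e b]=\pm\int_{-\partial}^0 d\lambda\,\lambda^{?}\cdots$; expanding via the translation-covariance of the $\lambda$-bracket, $[\partial^d a_\lambda \partial^e b]=(-\lambda)^d(\partial+\lambda)^e[a_\lambda b]$, and then \eqref{quasicomm} integrates this against $d\lambda$ from $-\partial$ to $0$. Every monomial produced is of the form $\partial^{k}$ applied to a coefficient of $[a_\lambda b]$ with $k\ge d+e+1$ (the $+1$ coming from the integration, the $d+e$ from the operators $(-\lambda)^d(\partial+\lambda)^e$ which together contribute total degree $d+e$ in the combined $\lambda$-and-$\partial$ count). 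Since a coefficient of $[a_\lambda b]$ is a fixed element of $V$ (rank $\ge 0$), applying $\partial^{d+e+1}$ lands it in $E_{d+e+1}(V)$ by iterating \eqref{der}. This settles $i=j=0$ on generators with arbitrary derivatives attached, which is exactly the $h=1$ shape.

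For the general case I would induct on the total number of factors in the spanning monomials for $E_i(V)$ and $E_j(V)$. Write $u=xy$ with $x$ of rank $i_1$, $y$ of rank $i_2$, $i_1+i_2=i$ (or $\ge i$), and $v\in E_j(V)$. Then
\[
[xy,v]=x[y,v]+[x,v]y+\big([x,v]y+x[v,y]-[v,xy]\big)\cdot(\text{correction from }\eqref{commass}),
\]
— here I must be careful: \eqref{commass} rewrites the associator, and the associator itself is already in $E_{i_1+i_2+j}(V)$ by \eqref{prod} and is \emph{not} obviously in $E_{i+j+1}$. This is the main obstacle: the naive Leibniz expansion of $[xy,v]$ has a leftover associator term that only lies in $E_{i+j}(V)$, one unit short. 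To defeat it I would instead use \eqref{assoc}, which expresses the associator of the normally ordered product \emph{purely in terms of the $\lambda$-bracket}: $(x,y,v)=\big(\int_0^\partial x\big)[y_\lambda v]+\big(\int_0^\partial y\big)[x_\lambda v]$. Both summands are products of something in $\partial V$ with a $\lambda$-bracket term, and the $\lambda$-bracket term — by the same integration-raises-degree mechanism as the base case — already carries the missing derivative; combined with \eqref{prod} this places $(x,y,v)\in E_{i+j+1}(V)$. With the associator safely in $E_{i+j+1}(V)$, the Leibniz expansion closes: $x[y,v]\in E_{i_1}(V)\cdot E_{i_2+j+1}(V)\subset E_{i+j+1}(V)$ and $[x,v]y\in E_{i_1+j+1}(V)\cdot E_{i_2}(V)\subset E_{i+j+1}(V)$ by the inductive hypothesis and \eqref{prod}, finishing the induction. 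The delicate point throughout is bookkeeping the exact power of $\partial$ produced by $\int_{-\partial}^0$ and by the operators $(\partial+\lambda)^e$, $\lambda^d$; I expect that to be the only real calculation, and everything else is assembling \eqref{commass}, \eqref{quasicomm}, \eqref{assoc}, \eqref{prod} and \eqref{der}.
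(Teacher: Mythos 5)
Your base case is correct, and your reduction of the inductive step via the Leibniz-type identity $[xy,v]=[x,v]y+x[y,v]+(x,y,v)$ (obtained from \eqref{commass}) together with \eqref{assoc} is a reasonable skeleton; you also correctly single out the associator as the crux. But your resolution of the associator term does not work as stated. Writing out \eqref{assoc},
\[
(x,y,v)\;=\;\sum_{n\ge 0}\tfrac{1}{(n+1)!}\Bigl((\partial^{n+1}x)\,(y_{(n)}v)+(\partial^{n+1}y)\,(x_{(n)}v)\Bigr),
\]
the integration $\int_0^\partial$ acts only on the \emph{left} factor, producing $\partial^{n+1}x\in E_{i_1+n+1}(V)$; no integration is applied to $[y_\lambda v]$, so its coefficients $y_{(n)}v$ do not ``carry the missing derivative''. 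To place the $n$-th summand in $E_{i_1+i_2+j+1}(V)$ using \eqref{prod} you need $y_{(n)}v\in E_{i_2+j-n}(V)$ for all $n\ge 0$, i.e.\ compatibility of the \emph{non-negative} products with Li's filtration. This is automatic when $y$ and $v$ are single factors $\partial^d b$, $\partial^e c$ (sesquilinearity gives $(\partial^d b)_{(n)}(\partial^e c)\in\partial^{\,d+e-n}V+\partial^{\,d+e-n+1}V+\cdots$), but as soon as $y$ or $v$ is itself a normally ordered product you must control $[a_\lambda (bc)]$ and $[(ab)_\lambda c]$, which requires the noncommutative Wick formula \eqref{wick} (plus skew-symmetry) and a \emph{simultaneous} induction establishing $E_r\,{}_{(n)}\,E_s\subset E_{r+s-n}$ for $n\ge 0$ alongside the commutator estimate. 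Without that parallel induction your argument only bounds the $n=0$ summand of the associator by $E_{i_1+1}(V)$, which is short of the target by $i_2+j$ units.

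For comparison, the paper dismisses the lemma in one line by invoking quasi-commutativity \eqref{quasicomm}; that one-liner implicitly relies on exactly the same filtration estimates for the products $a_{(n)}b$, $n\ge 0$, which are part of Li's analysis (the same facts underlying the Poisson vertex algebra structure on $\gr V$). So the ingredient missing from your proposal is precisely the part of Li's work that the paper's proof takes for granted; once you add the Wick-formula induction proving $E_r\,{}_{(n)}\,E_s\subset E_{r+s-n}$ for $n\ge 0$, your argument closes.
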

\begin{proof}
Follows immediately from \eqref{quasicomm}.
\end{proof}

\begin{prop}\label{commgrad}
Let $V$ be a VOA. Then $[a][b] = [ab], \partial[a] = [\partial a]$ make
$$\gr V = \oplus_{i \geq 0} E_i(V)/E_{i+1}(V)$$
into a graded commutative (associative) differential algebra.
\end{prop}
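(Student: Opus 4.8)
The plan is to verify, in order: (i) that the operations $[a][b] := [ab]$ and $\partial[a] := [\partial a]$ are well-defined on $\gr V$; (ii) that $\partial$ is a derivation; (iii) that the product is commutative; and (iv) that it is associative, the last being the only point with real content.

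First I would check well-definedness. If $a \in E_i(V)$ and $b \in E_j(V)$, then $ab \in E_{i+j}(V)$ by \eqref{prod}, so $[ab] \in E_{i+j}(V)/E_{i+j+1}(V)$ lands in the correct graded piece. To see that $[ab]$ depends only on $[a]$ and $[b]$, it suffices to note that replacing $a$ by $a + a'$ with $a' \in E_{i+1}(V)$ changes $ab$ by $a'b \in E_{i+1}(V)E_j(V) \subset E_{i+j+1}(V)$, and similarly in the second slot; hence the class in $E_{i+j}/E_{i+j+1}$ is unchanged. The same argument with \eqref{der} handles $\partial$: $\partial a \in E_{i+1}(V)$, and $\partial a' \in E_{i+2}(V)$ when $a' \in E_{i+1}(V)$. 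That $\partial$ is a derivation of $\gr V$ is then immediate: $\partial$ is a derivation of the normally ordered product on $V$, so $\partial(ab) = (\partial a)b + a(\partial b)$ descends to $\partial[a][b] = [\partial a][b] + [a][\partial b]$ in the appropriate graded component $E_{i+j+1}/E_{i+j+2}$.

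Commutativity is where Lemma \ref{grok} enters. For $a \in E_i(V)$, $b \in E_j(V)$ we have $ab - ba = [a,b] \in E_{i+j+1}(V)$ by Lemma \ref{grok}, so $ab$ and $ba$ have the same image in $E_{i+j}(V)/E_{i+j+1}(V)$; thus $[a][b] = [ab] = [ba] = [b][a]$.

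The main obstacle is associativity, since the normally ordered product on $V$ is genuinely non-associative, with associator given by \eqref{assoc}. The point is that \eqref{assoc} expresses $(a,b,c)$ entirely in terms of $\lambda$-brackets: each of the two summands has the shape $(\int_0^\partial d\lambda\, x)\,[y_\lambda z]$. I would argue that, for homogeneous $a \in E_i(V)$, $b\in E_j(V)$, $c \in E_k(V)$, the associator $(a,b,c)$ lies in $E_{i+j+k+1}(V)$, one filtration level deeper than the product $(ab)c \in E_{i+j+k}(V)$; hence $[(ab)c] = [a(bc)]$ in $E_{i+j+k}(V)/E_{i+j+k+1}(V)$, which is exactly associativity of $\gr V$. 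The estimate on the associator should follow by combining Lemma \ref{grok} with \eqref{prod} and \eqref{der}: a term $[y_\lambda z]$ with $y \in E_p(V)$, $z \in E_q(V)$ contributes, coefficient by coefficient in $\lambda$, to $\llbr E_p(V), E_q(V)\rrbr$, and by quasi-commutativity \eqref{quasicomm} — or more directly by tracking that every monomial appearing in $[y_\lambda z]$ arises from the Wick-type expansions that also control the commutator — each $\lambda^n$-coefficient lies in $E_{p+q+1}(V)$ up to applying $\partial$, which only pushes it deeper by \eqref{der}; then multiplying by $\int_0^\partial d\lambda\, x$ with $x$ of rank at least $0$ and applying $\partial$ at least once keeps us in $E_{i+j+k+1}(V)$. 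Care is needed to confirm that the $\lambda$-bracket of two elements of ranks $p,q$ indeed has all coefficients in $E_{p+q+1}$ (not merely $E_{p+q}$); this is the analogue for the $\lambda$-bracket of Lemma \ref{grok} and is the technical heart of the argument. Once that is in hand, the four pieces assemble into the proposition.
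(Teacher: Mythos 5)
Your well-definedness, derivation, and commutativity arguments are exactly the paper's (the first three are dispatched in one line each there, with commutativity resting on Lemma \ref{grok} as you say). Where you diverge is associativity, and there your argument has a genuine gap. You reduce everything to the claim that each $\lambda^n$-coefficient of $[y_\lambda z]$, for $y \in E_p(V)$, $z\in E_q(V)$, lies in $E_{p+q+1}(V)$ (in fact $E_{p+q}(V)$ would already suffice for your bookkeeping, since the factor $\int_0^\partial d\lambda$ contributes at least one $\partial$). You flag this as the technical heart but do not prove it, and the derivation you sketch does not work as stated: quasi-commutativity \eqref{quasicomm} only controls the single combination $\sum_n \tfrac{(-1)^n}{n+1}\partial^{n+1}(y_{(n)}z)$, not the individual products $y_{(n)}z$, and the paper's filtration axioms \eqref{prod}--\eqref{der} say nothing about the non-negative products. (The estimate is true --- it is essentially Li's main lemma in \cite{abelian} --- but it requires its own proof.)

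The paper sidesteps all of this with identity \eqref{commass}: in any left-symmetric algebra,
$$(a,b,c) = [c,a]b + a[c,b] - [c,ab],$$
so the associator is a sum of terms each containing a commutator. For $a\in E_i(V)$, $b\in E_j(V)$, $c\in E_k(V)$, Lemma \ref{grok} together with \eqref{prod} places each of the three terms, hence $(a,b,c)$ itself, in $E_{i+j+k+1}(V)$; equivalently, $\gr V$ is a commutative left-symmetric algebra and is therefore associative. This uses only the commutator estimate you have already established for commutativity and never touches the $\lambda$-bracket. I would replace your associativity paragraph with this argument; alternatively, if you want to keep your route, you must actually prove the filtration estimate on the products $a_{(n)}b$, $n\geq 0$, which is more work than the proposition itself.
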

\begin{proof}
Well definedness of the product is clear. Its commutativity follows from Lemma \ref{grok}.
By \eqref{commass}, associativity follows from commutativity and left-symmetry of the product in $V$. Finally, $\partial$ is well-defined, and its derivation property descends to the quotient.
\end{proof}
\begin{rem}
Li proves that, if $V$ is a VOA, then $\gr V$ can be endowed with a Poisson vertex algebra structure. However, we will not need this fact. 
\end{rem}

\begin{thm}[Li]\label{gen}
Let $X$ be a subset of homogeneous elements of a VOA $V$. Then $X$ strongly generates $V$ if and only if elements $[x], x \in X,$ generate $\gr V$ as a differential commutative algebra.
\end{thm}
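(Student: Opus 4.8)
The plan is to prove both implications by relating the filtration $E_n(V)$ to the images of iterated normally ordered products of derivatives of the generators, and then to identify the associated graded object explicitly. Recall that, by definition, $E_n(V)$ is spanned by all parenthesized products $\partial^{d_1}a_1\cdots\partial^{d_h}a_h$ of homogeneous elements with $\sum d_i\geq n$. The key elementary observation I would establish first is that, for any subset $X$ of homogeneous elements, the subspace spanned by all parenthesized products $\partial^{d_1}x_1\cdots\partial^{d_h}x_h$ with $x_i\in X$ and $\sum d_i\geq n$, together with $E_{n+1}(V)$, depends only on the $\Cset[\partial]$-submodule (indeed only on the left-symmetric subalgebra) generated by $X$ modulo higher filtration pieces --- this is where left-symmetry and the inclusions $E_iE_j\subset E_{i+j}$, $\partial E_i\subset E_{i+1}$ of \eqref{prod}--\eqref{der} are used.

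For the ``only if'' direction, suppose $X$ strongly generates $V$. Then every element of $V$ is a $\Cset$-linear combination of parenthesized monomials $\partial^{d_1}x_1\cdots\partial^{d_h}x_h$ with $x_i\in X$. Projecting into $E_i(V)/E_{i+1}(V)$ and using Proposition \ref{commgrad}, such a monomial maps to $[\partial^{d_1}x_1]\cdots[\partial^{d_h}x_h] = \partial^{d_1}[x_1]\cdots\partial^{d_h}[x_h]$ in the commutative differential algebra $\gr V$; hence the $[x]$, $x\in X$, generate $\gr V$ as a differential commutative algebra. This direction is essentially bookkeeping once Proposition \ref{commgrad} is in hand.

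The substantive direction is ``if''. Assume the $[x]$ generate $\gr V$ as a differential commutative algebra; I want to deduce that $X$ strongly generates $V$. Let $W\subset V$ be the (graded) subspace strongly generated by $X$ --- the span of all parenthesized products of derivatives of elements of $X$. I would argue by downward induction on the filtration, or equivalently by induction on the grading $V=\oplus_n V^n$ (here finite-dimensionality of each $V^n$ and the fact that both $\partial$ and the normally ordered product strictly raise degree are essential, so the induction is well-founded: $E_{N}(V)\cap V^n=0$ for $N>n$). Given $a\in V^n$, of rank $i$, the hypothesis gives a differential-polynomial expression for $[a]\in\gr V$ in the $[x]$; lifting each $[\partial^{d}x]$ to $\partial^{d}x\in W$ and forming the corresponding parenthesized product $w\in W$ (associated on the right, say), Proposition \ref{commgrad} ensures $[w]=[a]$ in $E_i(V)/E_{i+1}(V)$, so $a-w\in E_{i+1}(V)$. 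Now $a-w$ has strictly higher rank (and lies in $V^n$, so the process terminates after finitely many steps since $E_{i+1}(V)\cap V^n$ eventually vanishes); iterating, $a\in W$.

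The main obstacle is the lifting step in the ``if'' direction: $\gr V$ is only the \emph{associated graded}, so a product $[\partial^{d_1}x_1]\cdots[\partial^{d_h}x_h]$ in $\gr V$ equals $[w]$ only for a \emph{particular} choice of parenthesization and ordering of the lift $w$, and a priori different choices of lift differ by elements of higher filtration --- this is exactly why one must iterate rather than lift once. One must check that this iteration actually terminates; termination is guaranteed by working inside a single graded piece $V^n$ and invoking $E_{N}(V)\cap V^n=0$ for $N$ large, which in turn uses that each $a_i$ is homogeneous of positive degree and that $\partial$ raises degree, so that a monomial $\partial^{d_1}a_1\cdots\partial^{d_h}a_h$ of total $\partial$-degree $\geq N$ lies in degree $\geq N+\sum(\deg a_i)\geq N$. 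A secondary point to handle carefully is that $\gr V$ being generated as a \emph{differential} commutative algebra means we are allowed arbitrary $\partial$'s of the $[x]$; since $\partial$ is a derivation of $V$ and $\partial[x]=[\partial x]$, lifting $\partial^d[x]$ to $\partial^d x\in W$ is legitimate, so this causes no trouble beyond notation.
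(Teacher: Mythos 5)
The paper does not prove this statement: it is quoted as Li's theorem from \cite{abelian}, so there is no internal proof to compare against. Your argument is the standard one and is essentially sound. The ``if'' direction is handled correctly: you identify the genuine issues (the lift of a product in $\gr V$ only agrees with the chosen parenthesized lift modulo higher filtration, forcing an iteration; and termination of that iteration), and you resolve termination the right way, by working inside a fixed $V^n$ and using $E_N(V)\cap V^n=0$ for $N>n$ (you should also note that the differential-polynomial expression for $[a]$ can be taken homogeneous in \emph{both} gradings, so the lift $w$ stays in $V^n$). The one spot that needs tightening is the ``only if'' direction: you cannot simply take $a\in E_i(V)$, write it as a combination of monomials in the $\partial^d x$, and project termwise into $E_i(V)/E_{i+1}(V)$, because individual monomials in that expression may have total $\partial$-degree $d<i$ --- each lies only in $E_d(V)$, and its class is not defined in $E_i(V)/E_{i+1}(V)$ even though the full sum lies in $E_i(V)$. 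The correct intermediate step (which your opening paragraph gestures at but does not quite state) is that $E_i(V)$ itself is spanned by parenthesized monomials in derivatives of elements of $X$ with total $\partial$-degree at least $i$: substitute the spanning expressions for arbitrary homogeneous $a_j$ into the defining monomials of $E_i(V)$ and use $E_p(V)E_q(V)\subset E_{p+q}(V)$. Once that is in place, $E_i(V)/E_{i+1}(V)$ is spanned by images of monomials of total degree exactly $i$, which by Proposition \ref{commgrad} are the products $\partial^{d_1}[x_1]\cdots\partial^{d_h}[x_h]$, and the conclusion follows; this is the same mechanism the paper invokes in the proof of Proposition \ref{dd}.
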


In other words, a VOA $V$ is strongly finitely generated if and only if $\gr V$ is finitely generated as a differential commutative algebra.

\subsection{Strong generators of ideals}

The problem of finding strong generators for a VOA is closely connected to that of finding nice sets of generators for its ideals.

Recall that, if $A$ is a DGLsA, $I\subset A$ is a {\em (two-sided, right) ideal} of $A$ if it is a (two-sided, right) homogeneous differential ideal. We denote by $(X))$ the smallest right ideal of $A$ containing a given subset $X \subset A$, and similarly, by $((X))$, the smallest two-sided ideal containing $X$. 
A subspace $U \subset A$ is {\em strongly generated} by $X \subset U$ if $U = (\CD X)A $. When dealing with strongly generated ideals, we will henceforth abuse notation and write $XA$ for $(\CD X)A$.

We rephrase another of Li's results as follows
\begin{thm}\label{genideal}
Let $I$  be a right ideal of a VOA $V$. Then $\gr I$ is a (differential) ideal of $\gr V$, and $X\subset V$ strongly generates $I$ if and only if $[x], x \in X,$ generate $\gr I$ as a differential ideal of $\gr V$.
\end{thm}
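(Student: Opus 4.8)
The plan is to run the proof of Theorem~\ref{gen} with the VOA $V$ replaced by the right ideal $I$; the only new ingredients are that Li's filtration restricts compatibly to $I$, and that the \emph{right}-ideal hypothesis is exactly what makes the associated graded object, and the lifts of generators, behave correctly. Concretely, I would first set $E_n(I):=I\cap E_n(V)$. Since $I$ is a homogeneous differential right ideal, \eqref{der} gives $\partial E_n(I)\subset E_{n+1}(I)$, and from the identity $E_{n+1}(I)=E_n(I)\cap E_{n+1}(V)$ the natural maps $E_n(I)/E_{n+1}(I)\to E_n(V)/E_{n+1}(V)$ are injective; hence $\gr I:=\bigoplus_n E_n(I)/E_{n+1}(I)$ is a graded, $\partial$-stable subspace of $\gr V$, and the rank of an element of $I$, together with its symbol $[a]$, does not depend on whether it is computed in $I$ or in $V$. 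I would also record that the filtration is finite in each homogeneous component: using $\partial\1=0$ and the unitality of $\1$ one may take every leaf of a defining product of $E_n(V)$ to have positive degree, and then a product of rank $\ge n$ lying in $V^k$ forces $n<k$, so $E_k(V)\cap V^k=0$.

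Next I would verify that $\gr I$ is a differential ideal of $\gr V$. For $a\in E_i(V)$ and $b\in E_j(I)$ one has $ab\in E_{i+j}(V)$, since $E_i(V)E_j(V)\subset E_{i+j}(V)$, while commutativity of $\gr V$ (Proposition~\ref{commgrad}) together with Lemma~\ref{grok} give $[a][b]=[ba]$; but $ba\in I$ because $I$ is a right ideal, so $ba\in E_{i+j}(I)$ and $[a][b]\in\gr I$. This, with $\partial$-stability, is the claim. In particular $X\subset I$ forces $[x]\in\gr I$ for every $x\in X$, so the differential ideal $J$ of $\gr V$ generated by the classes $[x]$ is automatically contained in $\gr I$ --- that is already one implication of the theorem --- and, since the same discussion applies to the right ideal $I'$ strongly generated by $X$, one also has $J\subset\gr I'$, while $X\subset I$ gives $I'\subset I$ and hence $\gr I'\subset\gr I$.

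For the equivalence, suppose first that the $[x]$ generate $\gr I$, i.e. $\gr I=J$. Then $J\subset\gr I'\subset\gr I=J$ forces $\gr I'=\gr I$, and combining this with $I'\subset I$ and the finiteness of the filtration in each $V^k$ --- a descending induction on rank using $E_n(I)\subset E_n(I')+E_{n+1}(I)$ --- gives $I'=I$, i.e.\ $X$ strongly generates $I$. Conversely, if $X$ strongly generates $I$ then $I=I'$ is spanned by iterated normally ordered products built from derivatives of elements of $X$ and of $V$, each of which lies in $I$ since $\partial^d x\in I$ and $I\cdot V\subset I$; the symbol of such a product of rank $n$ equals, independently of its parenthesization (Lemma~\ref{grok}, Proposition~\ref{commgrad}), a monomial $\partial^{d_0}[x]\prod_i\partial^{d_i}[v_i]\in J$, and the standard filtered-to-graded argument --- verbatim as in the proof of Theorem~\ref{gen} --- shows these symbols span $\gr I$, so $\gr I=J$. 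I expect this last step to be the only real obstacle: a product may have rank strictly larger than the sum of its derivative orders, so, exactly as in Li's argument, one must rewrite it as a combination of products whose symbols are visible in the correct graded piece. Everything that is special to ideals --- compatibility of $E_\bullet$ with $I$, the identity $[a][b]=[ba]$ that makes $\gr I$ an ideal, and the fact that lifts of generators remain inside $I$ --- rests solely on $I$ being a homogeneous differential right ideal, which is why working with right rather than two-sided ideals costs nothing here.
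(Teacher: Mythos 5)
The paper offers no argument for this statement---it is presented as a rephrasing of a result of Li \cite{abelian}---so there is no in-paper proof to compare yours against, and I assess the proposal on its own terms. The scaffolding is correct: setting $E_n(I)=I\cap E_n(V)$ embeds $\gr I$ into $\gr V$ compatibly with $\partial$; the step $[a][b]=[b][a]=[ba]$ with $ba\in I\cdot V\subset I$ is exactly the right way to extract a two-sided ideal in $\gr V$ from a merely one-sided hypothesis on $I$; and the ``if'' direction is sound, since $J\subset\gr I'$ for $I'=(\CD X)V$ only requires that single products $(\partial^dx)v$ lie in $I'\cap E_n(V)$, and your descending induction on rank inside each $V^k$ (using $E_i(V)\cap V^j=0$ for $i>j$) legitimately upgrades $I'\subset I$, $\gr I'=\gr I$ to $I'=I$. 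It is good that you argue this directly rather than quoting the paper's later lemma ``$\gr I=\gr J\Rightarrow I=J$'', whose proof depends on the present theorem.

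The gap is exactly where you flag it, and it is not a routine verification: in the ``only if'' direction you must show $\gr I\subset J$, i.e.\ that $a\in E_n(I)$ has symbol in $J_n$ even when every available expression $a=\sum_i(\partial^{d_i}x_i)v_i$ coming from $I=(\CD X)V$ consists of terms of rank $m<n$ whose rank-$m$ symbols cancel. Knowing that $\sum_i[\partial^{d_i}x_i][v_i]=0$ in $\gr_mV$ is a relation in the commutative algebra $\gr V$; it does not by itself rewrite $\sum_i(\partial^{d_i}x_i)v_i$ as an element of $\sum_{d+e\ge m+1}(\partial^dX)\,E_e(V)$, which is what the induction needs. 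The mechanism that makes this possible is the specific shape of the error terms in \eqref{quasicomm} and \eqref{assoc}: reordering and reassociating normally ordered monomials costs only terms carrying extra powers of $\partial$, hence lying strictly deeper in Li's filtration and still of the form $(\partial^dx)w$ after a further rewriting. Carrying this out is the actual content of Li's theorem, and your appeal to doing it ``verbatim as in the proof of Theorem~\ref{gen}'' defers to a proof that is likewise absent from the paper. As written, the proposal establishes that $\gr I$ is a differential ideal and proves one implication; the other implication is reduced to, rather than derived from, the unproved Theorem~\ref{gen}, so you should either supply that rewriting argument explicitly or cite Li for it.
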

We can easily apply this statement to elements of Li's filtration.
\begin{prop}\label{dd}
Let $X$ be a set of homogeneous generators of a VOA $V$. Then $E_d(V)$ is strongly generated by monomials
$$:(\partial^{d_1} x_{1}) \, (\partial^{d_2} x_{2}) \, \dots \, (\partial^{d_{h-1}} x_{h-1}) \, (\partial^{d_h} x_{h}):,$$
where $x_i \in X$, and $d_i>0$ satisfy $d_1 + \dots + d_h = d$. In particular, if $V$ is finitely generated, then $E_d(V)$ is a strongly finitely generated ideal.
\end{prop}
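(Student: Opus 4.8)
The plan is to deduce Proposition \ref{dd} from Theorem \ref{genideal} by transporting the problem to the differential commutative algebra $\gr V$, where the corresponding statement about $\gr E_d(V)$ becomes a routine monomial computation. First I would observe that, by Theorem \ref{gen}, the classes $[x]$ for $x \in X$ generate $\gr V$ as a differential commutative algebra; since $\partial[x] = [\partial x]$ and $\gr V$ is commutative associative, $\gr V$ is spanned by monomials $[\partial^{e_1} x_1]\cdots[\partial^{e_k} x_k]$ with $x_i \in X$ and $e_i \geq 0$. I would then identify $\gr E_d(V)$ inside $\gr V$: because $E_d(V)$ is already a term of Li's filtration, the associated graded pieces $E_i(E_d(V))/E_{i+1}(E_d(V))$ are, for $i \geq d$, equal to $E_i(V)/E_{i+1}(V)$ and vanish for $i < d$, so $\gr E_d(V)$ is exactly the ideal $\bigoplus_{i \geq d} E_i(V)/E_{i+1}(V)$ of $\gr V$.

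Next I would check that this ideal of $\gr V$ is generated, as a differential ideal, by the classes $[{:}(\partial^{d_1}x_1)\cdots(\partial^{d_h}x_h){:}]$ with $x_i \in X$, $d_i > 0$, $\sum d_i = d$ — equivalently by the commutative monomials $[\partial^{d_1}x_1]\cdots[\partial^{d_h}x_h]$ in $\gr V$. Any monomial lying in $\bigoplus_{i\ge d}$ has total derivative-weight $\sum e_i \geq d$; after discarding factors with $e_i = 0$ (which only lowers the weight and stays in a smaller ideal, so can be handled inductively or absorbed since undifferentiated generators are not needed to reach $E_d$) one is left with a product of $\partial^{e_i}x_i$, $e_i > 0$, of total weight $\geq d$. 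Such a product is obtained from one of the listed generators of weight exactly $d$ by multiplying by further elements of $\gr V$ and by applying $\partial$, using the Leibniz rule; conversely each listed generator indeed lies in the ideal. This is precisely the statement that the listed monomials generate $\gr E_d(V)$ as a differential ideal, so Theorem \ref{genideal} gives that the corresponding elements of $V$ strongly generate $E_d(V)$. Finiteness of $X$ immediately bounds the number of generators (only finitely many choices of $h$, of $x_i \in X$, and of compositions $d_1 + \dots + d_h = d$), giving the final assertion.

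The main obstacle I anticipate is the bookkeeping around factors with zero derivative weight: a monomial in $\gr V$ of total weight $\geq d$ may mix differentiated and undifferentiated generators in many ways, and one must argue carefully that it lies in the differential ideal generated by the pure "all $d_i > 0$" monomials of weight exactly $d$ — the point being that an undifferentiated generator $[x]$ contributes $0$ to the weight, so it can simply be factored out and treated as part of the ambient ring element multiplying a lower-$h$, weight-$d$ generator. A clean way to organize this is to induct on the number of undifferentiated factors, or to note directly that $\partial^{d_1}x_1 \cdot (\text{rest})$ with $d_1 \geq 1$ and $d_1 \leq d$ can always be completed to weight $d$ by the remaining differentiated factors while the undifferentiated ones are swept into the coefficient. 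Everything else — well-definedness of $[\cdot]$, the identification of $\gr E_d(V)$, the Leibniz expansions — is formal given Proposition \ref{commgrad} and Theorems \ref{gen} and \ref{genideal}.
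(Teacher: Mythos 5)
Your argument follows the paper's proof exactly: both reduce the statement via Theorem \ref{genideal} to showing that $\gr E_d(V)=\bigoplus_{i\ge d}E_i(V)/E_{i+1}(V)$ is the differential ideal of $\gr V$ generated by the classes of the weight-$d$ monomials with all $d_i>0$, the key input being that each $E_n(V)/E_{n+1}(V)$ is spanned by classes of monomials of total derivative weight $n$. The paper dismisses the remaining combinatorics as immediate, so your write-up is, if anything, more explicit about the one point both leave partly implicit (absorbing undifferentiated factors and completing sub-monomials to weight exactly $d$ via products and derivatives).
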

\begin{proof}
It follows immediately by noticing that $E_n(V)/E_{n+1}(V)$ is linearly generated by classes of monomials
$$:(\partial^{d_1} x_{1}) \, (\partial^{d_2} x_{2}) \, \dots \, (\partial^{d_{h-1}} x_{h-1}) \, (\partial^{d_h} x_{h}):,$$
where $x_i \in X$, and $d_i \geq 0$ satisfy $d_1 + \dots + d_h = n$.
\end{proof}

\subsection{Weak vertex algebras}

In order to construct and use Li's filtration, we do not need the full power of VOAs. Indeed, the $E_i(A)$ always constitute a decreasing filtration of the DGLsA $A$ and satisfy \eqref{prod}, \eqref{der}. In order to show that $\gr A$ is commutative and associative, we also need 
\begin{equation}\label{bra}
[E_i(A), E_j(A)] \subset E_{i+j+1}(A).
\end{equation}
This certainly holds in VOAs, but stays true under weaker conditions.

\begin{defn}
A {\em weak VOA} is a DGLsA $A = \oplus_{i \geq 0} A^i$ satisfying \eqref{bra}.
\end{defn}
\begin{ex}
\qquad\
\begin{itemize}
\item Every non-negatively graded differential commutative (associative) algebra is a weak vertex operator algebra.
\item Every VOA is a weak vertex operator algebra.
\item Let $V$ be a VOA, $I \subset V$ a two-sided ideal. Then $V/I$ is a weak vertex operator algebra: indeed, $V/I$ is a DGLsA and constructing Li's filtration commutes with the canonical projection. Notice that $V/I$ fails to be a VOA, unless $I$ is a vertex algebra ideal.
\end{itemize}

\end{ex}
If $A$ is a weak VOA, then $\partial^d A \subset \oplus_{i \geq d} A^i$, hence $E_n(A) \subset \oplus_{i \geq n} A^i$. Thus, $\cap_n E_n(A) = (0)$, and $E_i(A) \cap A^j = (0)$ as soon as $i>j$. Proposition \ref{commgrad}, \ref{dd} and Theorems \ref{gen}, \ref{genideal} easily generalize to the weak VOA setting.

Chains of inclusions between ideals in a weak VOA also behave nicely, due to the following observation:
\begin{lemma}
Let $I\subset J$ be right ideals of a weak VOA $A$ satisfying $\gr I = \gr J$. Then $I = J$.
\end{lemma}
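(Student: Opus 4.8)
The plan is to argue by contradiction using the exhaustiveness of Li's filtration. Suppose $I \subsetneq J$, and pick $a \in J \setminus I$. Since $I$ and $J$ are homogeneous differential ideals, we may assume $a$ is homogeneous, say $a \in A^m$. Because $\cap_n E_n(A) = (0)$ and the $A^j$ are finite-dimensional, $a$ has a well-defined rank: there is a largest $d$ with $a \in E_d(A)$, and for this $d$ we have $a \in E_d(A) \setminus E_{d+1}(A)$, so $[a] = a + E_{d+1}(A)$ is a nonzero element of $E_d(J)/E_{d+1}(J)$ sitting inside $\gr J$ in filtration degree $d$.

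The key step is to use the hypothesis $\gr I = \gr J$ to find, for this particular $a$, an element $a' \in I$ of strictly higher rank that agrees with $a$ modulo $E_{d+1}(A)$. Indeed, $[a] \in \gr J = \gr I$ means that the class of $a$ in $E_d(A)/E_{d+1}(A)$ is represented by some element $b \in E_d(I)$; thus $a - b \in E_{d+1}(A)$, and since $a \in J$, $b \in I \subset J$, we get $a - b \in J \setminus I$ (it cannot lie in $I$, else $a = (a-b) + b \in I$) and $a - b$ has rank strictly greater than $d$. So replacing $a$ by $a - b$, we have produced an element of $J \setminus I$ of strictly larger rank, still homogeneous of degree $m$ (as the filtration and the decomposition into homogeneous components are compatible, by the grading remark preceding the lemma — we may take the degree-$m$ component of $a - b$, which still lies in $J\setminus I$).

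Iterating this construction yields a sequence of elements of $J \setminus I$, all homogeneous of the fixed degree $m$, of strictly increasing rank. But $E_i(A) \cap A^m = (0)$ as soon as $i > m$, so any element of $A^m$ of rank $> m$ must be zero — contradiction, since $0 \in I$. Hence no such $a$ exists and $I = J$. The main obstacle is the bookkeeping in the induction: one must be careful that after subtracting the representative $b \in E_d(I)$ the difference remains in the same homogeneous component and genuinely outside $I$; this is where finite-dimensionality of the $A^m$, homogeneity of the ideals, and the compatibility of Li's filtration with the grading are all used, but each of these is already available in the weak VOA setting as noted just above the statement.
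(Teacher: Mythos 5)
Your argument is correct, and it is genuinely different from the paper's. The paper's proof is a one-liner resting on the weak-VOA version of Theorem \ref{genideal}: pick $X\subset I$ whose symbols generate $\gr I$ as a differential ideal of $\gr A$; since $\gr I=\gr J$, the same symbols generate $\gr J$, so $X$ strongly generates both ideals and $I=XA=J$. You instead run a direct successive-approximation argument: a homogeneous $a\in J\setminus I$ of degree $m$ and rank $d$ is corrected by a representative $b\in I\cap E_d(A)$ of the same symbol, producing an element of $J\setminus I$ of the same degree and strictly larger rank, and termination follows from $E_i(A)\cap A^m=(0)$ for $i>m$. The bookkeeping you flag is indeed harmless: $E_{d+1}(A)$ is spanned by products of homogeneous elements and is therefore a graded subspace, and right ideals are homogeneous by definition, so passing to the degree-$m$ component of $a-b$ is legitimate. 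As for what each route buys: the paper's proof is shorter but leans on Li's generation theorem, while yours is self-contained and, more importantly, never uses that $I$ and $J$ are ideals --- only that they are graded subspaces --- so it actually proves the stronger statement, recorded later in Section 7 of the paper, that an inclusion of graded subspaces with equal associated graded objects is an equality.
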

\begin{proof}
If $X \subset I$ generates $\gr I$ as an ideal of $\gr A$, then it also generates $\gr J$, hence $I = J = XA$.
\end{proof}

\section{The ascending chain condition in a VOA}

\subsection{Full ideals}

\begin{defn}
Let $I$ be a right ideal of a VOA $V$. Then $I$ is {\em full} if $E_N(V) \subset I$ for sufficiently large values of $N$.
\end{defn}
Full ideals are important because of the following key observation.
\begin{thm}\label{fg}
Let $V$ be a strongly finitely generated VOA, $I \subset V$ a full right ideal.
Then $I$ is a strongly finitely generated ideal.
\end{thm}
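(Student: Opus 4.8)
The strategy is to move the statement, via Li's filtration, into a finiteness question inside the graded commutative differential algebra $\gr V$ of Proposition \ref{commgrad}, and then to exploit the hypothesis that $I$ is full in order to replace differential noetherianity --- which fails --- by ordinary noetherianity. Write $G = \gr V = \bigoplus_{i\geq 0} G_i$, with $G_i = E_i(V)/E_{i+1}(V)$, and set $G_{\geq N} = \bigoplus_{i\geq N} G_i$; a direct check shows that $G_{\geq N}$ is a differential ideal of $G$ and that, in fact, $G_{\geq N} = \gr E_N(V)$. By Theorem \ref{genideal} it is enough to prove that $\gr I$ is finitely generated as a differential ideal of $G$. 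Since $V$ is strongly finitely generated, Theorem \ref{gen} provides finitely many rank-homogeneous elements $g_1, \dots, g_r \in G$ --- namely $g_j = [x_j]$ for homogeneous strong generators $x_j$ of $V$ --- that generate $G$ as a differential commutative algebra; equivalently, the $\D^k g_j$, $k \geq 0$, generate $G$ as an ordinary commutative algebra.

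Now I bring in fullness: fix $N$ with $E_N(V) \subseteq I$, so that $\gr I \supseteq \gr E_N(V) = G_{\geq N}$, and pass to the quotient $\bar G = G/G_{\geq N}$. Writing $i_j$ for the rank of $g_j$, the element $\D^k g_j$ lies in $G_{i_j+k}$ and therefore vanishes in $\bar G$ as soon as $i_j + k \geq N$; hence $\bar G$ is generated, as an ordinary commutative algebra, by the finitely many surviving images of the $\D^k g_j$ with $k < N$, and so is noetherian by Hilbert's basis theorem. The homogeneous ideal $\gr I / G_{\geq N}$ of $\bar G$ is then finitely generated by rank-homogeneous elements, which I lift to rank-homogeneous $h_1, \dots, h_s \in \gr I$. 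On the other hand, $E_N(V)$ is a strongly finitely generated ideal by Proposition \ref{dd}, so by Theorem \ref{genideal} its associated graded $G_{\geq N} = \gr E_N(V)$ is generated, as a differential ideal of $G$, by finitely many classes $f_1 = [y_1], \dots, f_t = [y_t]$ with $y_k \in E_N(V) \subseteq I$.

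It then remains to check that $\{h_1, \dots, h_s\} \cup \{f_1, \dots, f_t\}$ generates $\gr I$ as a differential ideal of $G$. The differential ideal they generate contains $G_{\geq N}$ (through the $f_k$), is contained in $\gr I$ (all generators lie in $\gr I$, which is a differential ideal of $G$), and has image in $\bar G = G/G_{\geq N}$ containing the classes of the $h_i$, hence containing $\gr I/G_{\geq N}$; these three facts together force it to equal $\gr I$. Lifting each $h_i$ to some $x_i \in I$ with $[x_i] = h_i$ --- possible since $h_i$ is a rank-homogeneous element of $\gr I$ --- the finite set $\{x_1, \dots, x_s\} \cup \{y_1, \dots, y_t\} \subseteq I$ has the property that the classes of its elements generate $\gr I$ as a differential ideal of $\gr V$, so by Theorem \ref{genideal} it strongly generates $I$.

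The one genuine obstacle is the failure of differential noetherianity that motivates the whole paper: there is no Hilbert basis theorem for $\gr V$ itself, so the reduction to ordinary commutative algebra cannot be performed naively. Fullness is exactly the hypothesis that removes the obstruction --- modulo $G_{\geq N}$ the derivation becomes nilpotent, only finitely many derivatives of the chosen generators survive, and $\bar G$ degenerates to an ordinary finitely generated commutative algebra --- after which everything is routine bookkeeping with Li's filtration.
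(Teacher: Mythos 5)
Your proof is correct and follows essentially the same route as the paper's: fullness reduces the problem, modulo $E_N(V)$, to an ordinary (non-differential) finitely generated commutative algebra where Hilbert's basis theorem applies, and Proposition \ref{dd} supplies strong finite generation of $E_N(V)$ itself. The only cosmetic difference is that you quotient $\gr V$ by $\gr E_N(V)$, whereas the paper first forms the weak VOA $V/E_N(V)$ and then takes its associated graded --- the two operations commute, so the arguments coincide.
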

\begin{proof}
As $I$ is full, it contains $E_N(V)$ for some $N \geq 0$. Then $\bar I = I/E_N(V)$ is an ideal of the quotient weak VOA $\bar V = V/E_N(V)$.

Notice that if $u_1, \dots, u_n$ are (strong) generators of $V$, then $\bar u_1, \dots, \bar u_n$ generate $\bar V$, hence elements $[\bar u_i]$ generate $\gr \bar V$ as a differential commutative associative algebra. However, only finitely many derivatives of each $[\bar u_i]$ are nonzero.
Therefore, $\gr \bar V$ is a finitely generated, and not just differentially finitely generated, commutative algebra. By Hilbert's basis theorem, the ideal $\gr \bar I$ is finitely generated, and we may apply the weak VOA version of Theorem \ref{genideal} to show that $I$ is strongly finitely generated modulo some $E_N(V)$. However, Proposition \ref{dd} shows that all ideals $E_N(V)$ are strongly finitely generated, hence $I$ is so too.
\end{proof}

By using a variant of the argument in Section \ref{determines}, one is able to prove the following statement.
\begin{lemma}\label{full}
Let $I$ be a right ideal of the VOA $V$. Then $I$ is full as soon asany one of the following properties is satisfied
\begin{itemize}
\item $I$ is nonzero and $V$ is a simple VOA;
\item $I$ contains some derivative of the Virasoro element $\omega$, provided that the central charge is nonzero;
\item $I$ is two-sided, and contains some derivative of the Virasoro element $\omega$.
\end{itemize}
\end{lemma}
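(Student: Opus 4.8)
The plan is to show that in all three cases the ideal $I$ contains $E_N(V)$ for large $N$, by exploiting the identity \eqref{assoc} for the associator of the normally ordered product, together with the filtration estimate $[E_i(V),E_j(V)]\subset E_{i+j+1}(V)$. The guiding idea is that membership of a single non-torsion element $a$ in a right ideal $I$ already forces $I$ to absorb, up to higher filtration degree, all products $[a_\lambda b]b'$ for $b,b'\in V$; iterating this should eventually reel in a whole tail $E_N(V)$. So the first step is a \emph{capture lemma}: if $a\in I$ is homogeneous and non-torsion, then for every $b\in V$ the element $:(\int_0^\partial d\lambda\, a)[a_\lambda b]:$ lies in $I$ (it equals $(a,a,b)-(\int_0^\partial d\lambda\,a)[a_\lambda b]\cdot(\text{term})$ — more precisely use $(a,a,b)=2(\int_0^\partial d\lambda\, a)[a_\lambda b]$ so that $(aa)b$ and $a(ab)$ both lie in $I$, giving the displayed element). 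Since $\int_0^\partial d\lambda\, a$ is a nonzero element of $\CD a$ of rank one higher than $a$, this produces, modulo a controlled raise in Li degree, all of $[a_\lambda b]$ inside $\gr I$ once we quotient appropriately.

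Next I would run the argument of Section~\ref{determines} \emph{in reverse}. There, knowing all normally ordered products let one solve the non-degenerate linear system \eqref{hilbert} for the $\partial^{n+j+1}c_j$. Here the point is dual: if $I$ is a right ideal containing $a$, then $I$ contains $[\partial^i a, b]$ for all $i$ (this is where right-ideal-ness plus quasi-commutativity \eqref{quasicomm} are used, via $[\partial^i a,b]=(\partial^i a)b-b(\partial^i a)\in I$ since the first term is in $I$ and... the second is not — so one must instead feed $\partial^i a\in I$ into the capture lemma, obtaining $:(\int_0^\partial\partial^i a)[{\partial^i a}_\lambda b]:\in I$). Varying $i$ and inverting the same non-degenerate matrix as in \eqref{hilbert} then places every $\partial^M c_j$ into $I$ for $M$ sufficiently large; that is, for any $a\in I$ non-torsion and any $b$, a high enough derivative of every OPE coefficient $a_{(j)}b$ lands in $I$. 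Combined with the capture lemma applied repeatedly to the generators of $V$, and with Proposition~\ref{dd} identifying $E_N(V)$ with the span of monomials $:(\partial^{d_1}x_1)\dots(\partial^{d_h}x_h):$ with all $d_i>0$, one concludes $E_N(V)\subset I$ for $N\gg0$: every such monomial is, up to lower-order corrections handled inductively on $h$, reachable from the $x_i$ by the operations just shown to preserve $I$.

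For the three bullets: in the first case, simplicity of $V$ forces $I$ (being nonzero) to contain a non-torsion homogeneous element — a torsion-only ideal would have trivial $\lambda$-bracket and, by Lemma~\ref{kac}, its associated vertex-algebra ideal would be proper and nonzero, contradicting simplicity (here one also uses that $\Tor V=\Cset\1$ and $\1\notin I$ is not assumed, so torsion in $I$ means multiples of $\1$, excluded by nonzero-ness being strengthened — care needed). In the second case $\partial^i\omega\in I$ is manifestly non-torsion, and the capture lemma with $a=\omega$, using $[\omega_\lambda\omega]=(\partial+2\lambda)\omega+\tfrac{c}{12}\lambda^3\1$ with $c\neq0$, injects a derivative of $\1$ — impossible unless $I$ already absorbs the relevant tail; more usefully, $[\omega_\lambda b]=(\partial+n\lambda)b+O(\lambda^2)$ for $b\in V^n$ recovers $\partial b$ up to $E_{\ge 2}$-corrections, bootstrapping to all of $E_N(V)$. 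The third case is the second with the nonzero-charge hypothesis removed: two-sidedness lets us use $[\omega_\lambda b]$ directly without needing the $\lambda^3\1$ term to be present, since $b[\partial^i\omega]\in I$ too. The main obstacle I anticipate is the bookkeeping in the reverse linear-algebra step: controlling \emph{which} power $M$ of $\partial$ is needed uniformly over the finitely many generators, and checking that the inductive descent on the number of factors $h$ in the $E_N$-monomials does not lose ground — i.e. that the ``lower-order corrections'' genuinely live in a strictly larger $E_{\bullet}$ and so are swept up by the induction hypothesis rather than creating a vicious circle.
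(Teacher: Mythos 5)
Your starting point is sound and is, I believe, the intended one: for a right ideal $I$ and $a\in I$, all of $(ab)c$, $a(bc)$ and $(\partial^{j+1}a)\,(b_{(j)}c)$ lie in $I$, so \eqref{assoc} forces $\left(\int_0^\partial d\lambda\, b\right)[a_\lambda c]\in I$ for \emph{arbitrary} $b,c\in V$ --- note this is strictly stronger than your capture lemma, which only treats $b=a$ via $(a,a,b)$; you should exploit the general associator $(a,b,c)$, not the special one. The first genuine gap is in the ``reverse linear-algebra'' step. The system you obtain by replacing $a$ with $\partial^i a$ has unknowns of the form $(\partial^{i+j+1}b)\,c_j$ (normally ordered products), not pure derivatives $\partial^{i+j+1}c_j$ as in \eqref{hilbert}. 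You cannot align the equations by applying $\partial^{n-i}$, because $\partial$ is a derivation of the normally ordered product and produces Leibniz cross-terms $(\partial^{i+j+1}b)(\partial c_j)$; without that alignment the unknowns change from equation to equation, and ``the same non-degenerate matrix'' is simply not there. The inversion of \eqref{hilbert} transfers verbatim only in your third bullet, where two-sidedness gives $[\partial^i a,b]=(\partial^i a)b-b(\partial^i a)\in I$ directly and the unknowns really are $\partial^{i+j+1}c_j$; there, with $a=\partial^k\omega$ and $c_1=(\deg b)\,b$, one cleanly gets $\partial^M b\in I$ for every homogeneous $b$ of positive degree. (Your remark that the $\lambda^3\1$ term ``injects a derivative of $\1$'' is off: $\partial\1=0$; what that term actually contributes is a pure derivative $\partial^M b$ of the \emph{other} factor, which is exactly why $c\neq 0$ is assumed in the second bullet.)

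The second, and larger, gap is the concluding step. Even granting $\partial^M V_+\subset I$, fullness does not follow from right multiplication alone: by Proposition \ref{dd}, $E_N(V)$ is spanned by monomials $:(\partial^{d_1}x_1)\cdots(\partial^{d_h}x_h):$ in which every $d_i$ may equal $1$, and such a monomial need not lie in the right ideal generated by $\{\partial^M b\}$ --- in the associated graded picture, the differential ideal of $\Cset[u^{(n)}]$ generated by $u^{(M)},u^{(M+1)},\dots$ never contains $(u^{(1)})^N$. So your ``inductive descent on $h$'' has no base as written; some further input (degree bookkeeping, the identity $a\in\llbr a,\omega\rrbr$, or simplicity in the form $\llbr a,V\rrbr=V$ for non-central $a$) is needed to sweep up the low-derivative monomials, and you do not supply it. The reduction of the first bullet to the presence of a non-torsion element is also easier than you make it ($\Cset\1$ is not a right ideal, since $\1 v=v$), while your appeal to ``a torsion-only ideal would have trivial $\lambda$-bracket'' is not a correct use of simplicity. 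I should add that the paper only asserts this lemma with a pointer to the argument of Section \ref{determines}, so I am judging your sketch as a standalone argument; as such it identifies the right tools but closes neither of the two gaps above.
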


\subsection{Noetherianity}

\begin{prop}
Let $V$ be a finitely generated VOA. Then $V$ satisfies the ascending chain condition on its full right ideals.
\end{prop}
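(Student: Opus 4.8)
The plan is to reduce the ascending chain condition on full right ideals to ordinary noetherianity of a finitely generated commutative algebra via the abelianizing filtration. Suppose $I_1 \subset I_2 \subset \dots$ is an ascending chain of full right ideals of $V$. Since each $I_k$ is full, there is some $N_k$ with $E_{N_k}(V) \subset I_k$; because the chain is increasing, all the $I_k$ (for $k \geq 1$) contain $E_{N_1}(V)$, so we may fix a single $N = N_1$ and work inside the quotient weak VOA $\bar V = V/E_N(V)$. Passing to the quotient, we obtain an ascending chain $\bar I_1 \subset \bar I_2 \subset \dots$ of right ideals of $\bar V$.

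Next I would invoke the key finiteness input from the proof of Theorem \ref{fg}: since $V$ is finitely generated, say by homogeneous elements $u_1, \dots, u_n$, the images $\bar u_i$ strongly generate $\bar V$, and only finitely many derivatives of each $[\bar u_i]$ are nonzero in $\gr \bar V$ (because $E_m(\bar V) = 0$ for $m \geq N$). Hence $\gr \bar V$ is a genuinely finitely generated commutative associative algebra, not merely a differentially finitely generated one. By Theorem \ref{genideal} (in its weak VOA version), $\gr \bar I_k$ is an ideal of $\gr \bar V$ for each $k$, and these form an ascending chain $\gr \bar I_1 \subset \gr \bar I_2 \subset \dots$ of ideals in a noetherian ring. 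By Hilbert's basis theorem this chain stabilizes: there is $m$ with $\gr \bar I_k = \gr \bar I_m$ for all $k \geq m$.

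Finally I would lift the stabilization back up. From $\gr \bar I_k = \gr \bar I_m$ and $\bar I_m \subset \bar I_k$, the Lemma on chains of ideals in a weak VOA (the one stating that $\gr I = \gr J$ with $I \subset J$ forces $I = J$) gives $\bar I_k = \bar I_m$ for all $k \geq m$. Pulling back along the projection $V \to \bar V = V/E_N(V)$, and using that all the $I_k$ contain $E_N(V)$, we recover $I_k = I_m$ for all $k \geq m$. This establishes the ascending chain condition on full right ideals.

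I expect the only real subtlety to be the uniform choice of $N$: one must observe that fullness of $I_1$, together with the inclusions $I_1 \subset I_k$, lets the \emph{same} $E_N(V)$ sit inside every term of the chain, so that the whole chain genuinely lives inside one fixed quotient $\bar V$; without this, the ``noetherianity of $\gr \bar V$'' step would not apply uniformly. The remaining ingredients — that $\gr$ is exact enough for $\gr \bar I_k$ to be an ascending chain of ideals, and that $\gr \bar V$ is finitely generated — are exactly the facts already assembled in Theorem \ref{fg} and Proposition \ref{dd}, so the argument is essentially a repackaging of those results plus one application of the classical Basissatz.
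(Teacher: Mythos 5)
Your proof is correct, but it takes a genuinely different route from the paper's. The paper forms the union $I = \bigcup_n I_n$, observes that it is again a full right ideal, applies Theorem \ref{fg} to obtain a finite set $X \subset I$ with $I = XV$, and then notes that $X$, being finite, already lies in some $I_N$, so the chain stabilizes at $I_N$ --- the classical ``finite generation implies ACC'' step. You instead run the ``ACC via the associated graded'' argument: you fix one $N$ with $E_N(V) \subset I_1$ (and hence in every $I_k$), pass to the quotient weak VOA $\bar V = V/E_N(V)$, use the observation from the proof of Theorem \ref{fg} that $\gr \bar V$ is an honestly (not just differentially) finitely generated commutative algebra, stabilize the chain $\gr \bar I_k$ by the Basissatz, and lift the stabilization through the Lemma asserting that $I \subset J$ with $\gr I = \gr J$ forces $I = J$. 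The two arguments consume the same ingredients (Proposition \ref{dd}, the finite generation of $\gr \bar V$, Hilbert's basis theorem, Theorem \ref{genideal}), but package them differently: the paper's version is shorter because it uses Theorem \ref{fg} as a black box, at the small cost of checking that the union of the chain is a full right ideal; yours avoids the union entirely and makes explicit exactly where noetherianity of the commutative ring $\gr \bar V$ enters, at the cost of re-running the internals of Theorem \ref{fg}. Your closing remark about the uniform choice of $N$ is exactly the right point to flag --- it is what allows the whole chain to live in a single quotient --- and your use of the lifting Lemma is legitimate as stated in the paper.
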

\begin{proof}
If
$$I_1 \subset I_2 \subset \dots \subset I_n \subset I_{n+1} \subset \dots$$
is an ascending sequence of full right ideals, set $I = \cup_n I_n$. Then $I$ is a full ideal, and we may use Theorem \ref{fg} to locate a finite $X \subset I$ such that $I = XV$. Due to finiteness of $X$, one may find $N \geq 0$ such that $X \subset I_N$. Then $I = XV \subset I_N$.
\end{proof}
All the following statements are now of immediate proof.
\begin{thm}\label{noetherian}
Every simple VOA satisfies the ascending chain condition on its right ideals.
\end{thm}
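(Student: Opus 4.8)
The plan is to reduce Theorem \ref{noetherian} to the Proposition immediately preceding it, which establishes the ascending chain condition on \emph{full} right ideals of a finitely generated VOA. Two things must be bridged: first, that a simple VOA is in particular finitely generated, so that the Proposition applies; and second, that for a \emph{simple} VOA the words ``full right ideal'' and ``right ideal'' refer to the same class of objects, modulo the trivial one.

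First I would address the second point, which is the substance. The first bullet of Lemma \ref{full} states precisely that in a simple VOA every nonzero right ideal is full. Hence the only right ideal that is not full is the zero ideal $(0)$. An ascending chain of right ideals $I_1 \subset I_2 \subset \dots$ in a simple VOA therefore either is identically zero — in which case it is trivially stationary — or has some term $I_m \neq 0$; from that point on every $I_n$ with $n \geq m$ is nonzero, hence full by Lemma \ref{full}, so the tail $I_m \subset I_{m+1} \subset \dots$ is an ascending chain of full right ideals and stabilizes by the preceding Proposition. Splicing the (finite, constant) head onto the eventually-constant tail shows the original chain stabilizes.

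Second, I would justify the hypothesis of finite generation. A simple VOA in the sense of this paper has $\Tor V = V^0 = \Cset\1$ and a Virasoro element; one still needs it to be strongly finitely generated in order to invoke the Proposition. Here I expect the intended reading is that ``simple VOA'' is tacitly taken among strongly finitely generated VOAs — consistent with the introduction, which announces the result for ``every strongly finitely generated simple vertex operator algebra'' — so that this hypothesis is available by assumption rather than derived. If instead it must be derived, one would argue via $\gr V$: simplicity constrains the structure of $\gr V$ as a differential commutative algebra, and one would need that such a $\gr V$ with finite-dimensional graded pieces is differentially finitely generated, then invoke Theorem \ref{gen}. I would state the theorem with the strong finite generation hypothesis made explicit to avoid this gap.

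The main obstacle is exactly this last point: the clean chain-condition argument is immediate once Lemma \ref{full} and the Proposition are in hand, so the only real content is making sure finite generation is legitimately available. Everything else — truncating the chain at its first nonzero term, quoting fullness of nonzero ideals, quoting stabilization of chains of full ideals — is bookkeeping. Accordingly I would keep the proof to one or two sentences: a chain of right ideals in a simple VOA is, after discarding an initial segment of zero ideals, a chain of full right ideals by Lemma \ref{full}, and such chains stabilize by the previous Proposition, the finite generation hypothesis being part of the standing assumptions on the VOAs considered.
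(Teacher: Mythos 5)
Your proof is correct and is exactly the argument the paper intends: the paper declares the theorem ``of immediate proof'' from the preceding Proposition (ACC on full right ideals of a finitely generated VOA) together with the first bullet of Lemma \ref{full} (nonzero right ideals of a simple VOA are full), which is precisely your reduction. Your observation that the finite-generation hypothesis is tacit in the statement is also well taken --- the paper itself concedes this by immediately rephrasing the theorem as ``every simple \emph{finitely generated} VOA is right-noetherian.''
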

\begin{thm}
Let $V$ be a VOA, $X \subset V$ a subset containing $\partial^i \omega$ for some $i \geq 0$. Then there exists a finite subset $X_0 \subset X$ such that $((X)) = ((X_0))$.
\end{thm}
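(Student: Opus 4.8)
The plan is to reduce the two-sided statement to the full-ideal noetherianity already established. First I would observe that by the second or third bullet of Lemma~\ref{full}, the two-sided ideal $((X))$ is full: it contains $\partial^i\omega$ for some $i\geq 0$, so it either falls under the ``contains a derivative of $\omega$ with nonzero central charge'' case or, since it is two-sided, under the unconditional ``two-sided and contains a derivative of $\omega$'' case. Hence $((X))$ is a full right ideal of $V$.

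Next I would apply Theorem~\ref{fg}, but I must first check that $V$ is strongly finitely generated. If $V$ is not finitely generated the statement as phrased should still be read with that hypothesis in force (it is implicit from the running assumptions of the section); granting strong finite generation of $V$, Theorem~\ref{fg} gives that the full right ideal $((X))$ is a strongly finitely generated \emph{right} ideal, i.e.\ $((X)) = YV$ for some finite $Y\subset ((X))$. Now each element of the finite set $Y$ lies in $((X))$, and $((X))$ is spanned, as a two-sided differential ideal, by finitely many iterated products and derivatives involving finitely many elements of $X$ at a time; thus there is a finite subset $X_0\subset X$ with $Y\subset ((X_0))$. The precise bookkeeping here is the only mildly delicate point: one needs that membership of a single element in the two-sided ideal generated by $X$ already witnesses membership in the two-sided ideal generated by some finite piece of $X$, which is immediate since any expression exhibiting $y\in((X))$ uses only finitely many generators.

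Finally I would close the loop: from $Y\subset ((X_0))$ and the fact that $((X_0))$ is a two-sided ideal containing $\partial^i\omega$ (choose $X_0$ to also contain that fixed element $\partial^i\omega\in X$), we get $YV\subset ((X_0))\subset ((X))=YV$, whence $((X_0))=((X))$, and $X_0$ is finite as required.

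The main obstacle I anticipate is purely organizational rather than mathematical: Theorem~\ref{fg} produces a finite set of \emph{right}-ideal strong generators, whereas the conclusion is about the \emph{two-sided} ideal; the resolution is simply that $((X))$, being full, is caught between $YV$ and itself, so the two closures coincide and one does not need to separately control two-sided generation. One should also make sure the chosen $X_0$ contains the distinguished $\partial^i\omega$ so that $((X_0))$ is genuinely full and the inclusion $YV\subset((X_0))$ has content.
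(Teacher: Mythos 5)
Your proof is correct and follows essentially the paper's intended route: fullness of $((X))$ via the last bullet of Lemma~\ref{full}, strong finite generation via Theorem~\ref{fg}, and the observation that finitely many right-ideal generators already lie in $((X_0))$ for some finite $X_0\subset X$ (the same finiteness-of-witnesses step that drives the ascending chain condition for full right ideals in the preceding Proposition). Your remark that strong finite generation of $V$ must be read as an implicit hypothesis is also well taken, since Theorem~\ref{fg} requires it.
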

\begin{thm}
Let $V$ be a simple VOA, $X \subset V$. Then there exists a finite subset $X_0 \subset X$ such that $(X)) = (X_0))$.
\end{thm}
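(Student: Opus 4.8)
The plan is to derive this statement from Theorem \ref{noetherian} together with Lemma \ref{full}, mimicking the standard argument that the ascending chain condition on ideals implies every ideal is finitely generated. Let $V$ be a simple VOA and $X \subset V$ an arbitrary subset. We want a finite $X_0 \subset X$ with $(X_0)) = (X))$, where $(X))$ denotes the right ideal strongly generated by $X$.

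First I would enumerate $X$ (or rather, build an increasing chain of finite subsets): choose $x_1 \in X$, then $x_2 \in X$, and so on, and set $X_n = \{x_1, \dots, x_n\}$ and $I_n = (X_n))$, the right ideal strongly generated by $X_n$. This produces an ascending chain $I_1 \subset I_2 \subset \dots$ of right ideals of $V$. By Theorem \ref{noetherian}, since $V$ is simple, this chain stabilizes: there is $N$ with $I_N = I_{N+1} = \dots$. The subtlety is that the chain should be arranged so that its union is all of $(X))$; this is automatic if $X$ is countable, and for general $X$ one argues that any element of $X$ not already in $I_N$ would, when appended, produce a strictly larger ideal, contradicting stabilization — so in fact every element of $X$ lies in $I_N = (X_N))$, whence $(X)) \subset I_N \subset (X))$ and we may take $X_0 = X_N$.

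The one point that genuinely uses simplicity (and not just a formal noetherianity statement about some fixed lattice of ideals) is that Theorem \ref{noetherian} applies to \emph{all} right ideals of a simple VOA, which in turn rests on the first bullet of Lemma \ref{full}: in a simple VOA every nonzero right ideal is full, hence — by the Proposition on the ascending chain condition for full right ideals of a finitely generated VOA, combined with the fact that a simple VOA is itself finitely generated once it has any nonzero proper ideal structure to speak of — the ACC holds. So strictly speaking I would first invoke Theorem \ref{noetherian} as a black box and only unwind it if the referee insists.

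The main obstacle is the set-theoretic bookkeeping when $X$ is uncountable: one cannot literally list $X$ as $x_1, x_2, \dots$. The clean fix is to consider the (possibly uncountable) family $\mathcal{F}$ of right ideals of the form $(X')) $ with $X' \subset X$ finite, partially ordered by inclusion; by Theorem \ref{noetherian} this family has a maximal element $(X_0))$ with $X_0$ finite, and maximality forces $x \in (X_0))$ for every $x \in X$ (else $(X_0 \cup \{x\}))$ would be strictly larger), so that $(X)) = (X_0))$. This is the same trick used to show noetherian rings have all ideals finitely generated, transported to the setting of strongly generated right ideals of $V$; the only thing to check is that $(X_0 \cup \{x\})) \supsetneq (X_0))$ whenever $x \notin (X_0))$, which is immediate from the definition of the right ideal strongly generated by a set.
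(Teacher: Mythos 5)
Your argument is correct and is precisely the ``immediate proof'' the paper intends: the paper omits it, remarking only that the statement follows at once from Theorem \ref{noetherian}, and your maximal-element formulation (pick $(X_0))$ maximal among right ideals strongly generated by finite subsets of $X$, then use maximality to show every $x \in X$ already lies in $(X_0))$) is the standard way of converting the ascending chain condition into finite generation, correctly handling uncountable $X$. The one blemish is your parenthetical claim that a simple VOA is automatically finitely generated --- it is not; finite generation is an implicitly carried hypothesis throughout this section (cf.\ the paper's rephrasing ``every simple finitely generated VOA is right-noetherian'') --- but since you invoke Theorem \ref{noetherian} as a black box this does not affect the validity of your argument.
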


We may rephrase Theorem \ref{noetherian} by saying that every simple finitely generated VOA is right-noetherian.

\begin{rem}
Notice that, unless $V$ is associative (e.g., when $V$ is commutative), subspaces of the form $XV$ may fail to be right ideals, so the above reasoning {\bf does not} prove that if
$$X_1 \subset X_2 \subset \dots \subset X_n \subset X_{n+1} \subset \dots$$
is an increasing family of subsets, then the corresponding sequence
$$X_1 V \subset X_2 V \subset \dots \subset X_n V \subset X_{n+1} V\subset \dots$$
stabilizes. In other words, we do not know whether a simple fintiely generated VOA must satisfy the ascending chain condition {\bf also} on its subspaces of the form $XV$.
\end{rem}

\begin{rem}
Finite generation of every right ideal $I$ in a simple finitely generated VOA $V$ is a strong claim. However, one often needs a stronger statements which may easily fail.

Say that $I = (X))$ or even $I = XV$. Then it is true that one may find a finite subset $X_0 \subset I$ such that $I = X_0 V$, but there is no clear way to force $X_0 \subset X$.
The standard proof of this fact would require the ascending chain condition in the stronger form stated above.
\end{rem}
\section{Speculations on Hilbert's approach to finiteness in the\\
 VOA orbifold setting}

\subsection{Subspaces of the form $XV$}

Let $a, b$ be elements of a VOA $V$. Then \eqref{assoc} shows that $(a, b, c)\in aV + bV$ for every choice of $c \in V$. However, $(a, b, c) = (ab)c - a(bc)$; as $a(bc) \in aV$, then $(ab)c \in aV + bV$ for all $c \in V$. We can summarize this in the following statement:
\begin{lemma}
Let $V$ be a VOA, $X \subset V$ a collection of homogeneous elements not containing $\1$. Then $XV = \langle X \rangle_+ V$.
\end{lemma}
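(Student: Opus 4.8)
The plan is to read $\langle X\rangle_+$ concretely: it is the $\Cset$-span of all (arbitrarily parenthesized) iterated normally ordered products $:(\partial^{d_1}x_1)(\partial^{d_2}x_2)\cdots(\partial^{d_h}x_h):$ with $x_i\in X$, $d_i\ge 0$ and $h\ge 1$ --- equivalently, the positive-degree part of the $\partial$-stable left-symmetric subalgebra generated by $X$, the two descriptions agreeing because every element of $X$, being homogeneous and not a multiple of $\1$, has positive degree (recall $V^0=\Cset\1$). Here $XV$ and $\langle X\rangle_+V$ abbreviate $(\CD X)V$ and $(\CD\langle X\rangle_+)V$; since $\langle X\rangle_+$ is already $\partial$-stable the latter is just $\langle X\rangle_+\cdot V$, and since $\CD X\subset\langle X\rangle_+$ the inclusion $XV\subset\langle X\rangle_+V$ is immediate. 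The whole content is the reverse inclusion, and for this it suffices to prove $w\cdot v\in XV$ for each monomial $w$ as above and each $v\in V$.

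The engine is the associator formula \eqref{assoc}: for $a,b,c\in V$ it places $(a,b,c)$ inside $(\CD a)V+(\CD b)V$, so, writing $(ab)c=a(bc)+(a,b,c)$ and noting $a(bc)\in a\cdot V$, we get $(ab)V\subset(\CD a)V+(\CD b)V$ --- exactly the observation recorded just before the lemma, with the derivatives now made explicit. I would then establish, by induction on the number $h$ of factors of $w$, the statement: if $w$ is any parenthesized product of $\partial^{d_1}x_1,\dots,\partial^{d_h}x_h$ with $x_i\in X$, then $wV\subset(\CD x_1)V+\cdots+(\CD x_h)V$. The base case $h=1$ says $(\partial^{d}x)V\subset(\CD x)V$, which is clear. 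For the step, write $w=w'w''$ with $w'$ a parenthesized product of the first $k$ factors and $w''$ of the remaining $h-k$ ($1\le k<h$); for $v\in V$, $w\cdot v=w'(w''v)+(w',w'',v)$. The first term lies in $w'\cdot V$, covered by the inductive hypothesis for $w'$ since $w''v\in V$. The second lies in $(\CD w')V+(\CD w'')V$ by \eqref{assoc}; and because $\partial$ is a derivation of the normally ordered product, each $\partial^{j}w'$ is a $\Cset$-linear combination of parenthesized products of derivatives of the \emph{same} $x_1,\dots,x_k$ --- still $k<h$ of them --- so the inductive hypothesis applies termwise and yields $(\CD w')V\subset(\CD x_1)V+\cdots+(\CD x_k)V$, and symmetrically for $w''$. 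Adding up gives $w\cdot v\in(\CD x_1)V+\cdots+(\CD x_h)V$.

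Running this over a spanning set of monomials for $\langle X\rangle_+$ gives $\langle X\rangle_+\cdot V\subset\sum_{x\in X}(\CD x)V=(\CD X)V=XV$, which is the missing inclusion, and the lemma follows. The step I would be most careful about --- the crux of an otherwise routine bootstrap of the two-factor case --- is the precise shape of the inductive hypothesis: since both \eqref{assoc} and the Leibniz rule for $\partial$ manufacture fresh derivatives of the $x_i$, the statement being inducted on must quantify over all parenthesized products of all derivatives of elements of $X$, and the induction must be carried on the number of factors (which \eqref{assoc} and Leibniz leave fixed or decrease) rather than on degree or total derivative order, which they do not control.
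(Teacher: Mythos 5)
Your proof is correct and follows the same route as the paper: the paper's own argument is exactly the observation $(ab)c = a(bc) + (a,b,c) \in (\CD a)V + (\CD b)V$ from \eqref{assoc}, followed by ``an easy induction on the number of terms in the product.'' You have merely filled in the details the paper leaves implicit, and your care about the shape of the inductive hypothesis (quantifying over all derivatives, inducting on the number of factors) is precisely what makes that ``easy induction'' go through.
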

\begin{proof}
It is enough to show that if $u$ is a product of (derivatives) of elements from $X$, then $uV \subset XV$. This follows from the previous lemma and an easy induction on the number of terms in the product.
\end{proof}
\begin{prop}
Let $U \subset V$ be VOAs, $X \subset V$ a collection of homogeneous elements not containing $\1$. Then
$$X \mbox{ strongly generates } U \implies U_+ \subset XV \implies U_+ \subset XV + VX.$$
\end{prop}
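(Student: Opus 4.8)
The plan is to read the display as a chain of two implications and prove them in turn; the first is a one-step consequence of the preceding Lemma and the second is purely formal.

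Take first the implication: if $X$ strongly generates $U$, then $U_+ \subset XV$. By definition of strong generation, $X$ strongly generates the VOA $U$ precisely when $U = \langle X\rangle$ in the notation of the preceding Lemma, i.e.\ $U$ is spanned by the vacuum together with all normally ordered products of derivatives of elements of $X$. Since every element of $X$ has positive degree ($V^0 = \Cset\1$ and $\1 \notin X$), each such product has positive degree, and the only degree-zero part of $U$ is $\Cset\1$; hence $U_+ = \langle X\rangle_+$. The preceding Lemma then gives $XV = \langle X\rangle_+ V = U_+ V$, and since $\1 \in V$ and $u = u_{(-1)}\1$ for every $u \in V$, we obtain $U_+ = U_+ \cdot \1 \subset U_+ V = XV$.

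For the second implication, from $U_+ \subset XV$ to $U_+ \subset XV + VX$, there is nothing to do beyond the trivial inclusion $XV \subseteq XV + VX$ of subspaces. I would nevertheless record why the weaker conclusion is the one we want: $XV$ by itself need not even be a right ideal of $V$ (cf.\ the Remark at the end of Section 6), whereas $XV + VX$ is the natural left--right symmetric subspace attached to $X$, and it is $U_+ \subset XV + VX$ --- rather than $U_+ \subset XV$ --- that one would feed into the two-sided noetherianity statements of Section 6 when attempting to carry Hilbert's strategy over to an orbifold $U = V^G$.

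I do not anticipate a genuine obstacle here: the proposition is bookkeeping on top of the preceding Lemma and the identity $a_{(-1)}\1 = a$. The one point that deserves a word of care is the identification $U_+ = \langle X\rangle_+$ --- really just the convention (already needed for the preceding Lemma) that $\1 \notin X$ be read as $X \cap V^0 = \emptyset$, after which the identification is immediate because the generators themselves have positive degree.
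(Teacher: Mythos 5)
Your proof is correct and is exactly the intended argument: the paper states this proposition without proof, as an immediate consequence of the preceding lemma $XV=\langle X\rangle_+V$ together with $u=u_{(-1)}\1\in U_+V$ and the trivial inclusion $XV\subseteq XV+VX$. Your observation that ``$\1\notin X$'' should be read as $X\cap V^0=\emptyset$ (so that $\langle X\rangle_+=U_+$) is a fair and harmless clarification of the paper's convention.
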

The above implications can be reversed for certain classes of subalgebras.

\subsection{Split subalgebras}

Let $U \subset V$ be VOAs.
\begin{defn}
$U$ is a {\em split subalgebra} of $V$ if there exists a graded $\CD$-submodule decomposition $V = U \oplus M$ such that $UM \subset M$.
\end{defn}
Whenever $U$ is a split subalgebra of $V$, there exists a $\CD$-linear splitting $R: V \to U$ which is a homomorphism of $U$-modules. The splitting clearly satisfies $R^2 = R$, and
$R(uv) = uR(v), R(vu) = vR(u)$ for every $u\in U, v\in V$.

\begin{ex}
If $G$ is a reductive group acting on the finitely generated VOA $V$ by graded automorphisms, then $V^G$ is a split subalgebra of $V$.
\end{ex}

\begin{thm}\label{Hilbert}
Let $U$ be a split subalgebra of the VOA $V$, $X \subset U$ a collection of homogeneous elements not containing $\1$. Then
$$U_+ \subset XV + VX \implies X \mbox{ strongly generates } U.$$
\end{thm}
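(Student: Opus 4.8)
The plan is to show that the hypothesis $U_+ \subset XV + VX$, combined with the splitting $R : V \to U$, forces $X$ to strongly generate $U$, by running an induction on the degree (or, better, on the rank in Li's filtration). First I would fix the $\CD$-linear splitting $R$ afforded by the split decomposition $V = U \oplus M$, recalling that it is a homomorphism of $U$-bimodules: $R(uv) = uR(v)$ and $R(vu) = vR(u)$ for $u \in U$, $v \in V$. The key point is that applying $R$ to any element of $XV + VX$ lands inside the strongly generated ideal of $U$ cut out by $X$, because $R$ is $\CD$-linear and respects multiplication by elements of $U$ on both sides, and $X \subset U$. So the first step is the observation: $R(XV + VX) \subset (\CD X)U + U(\CD X)$, where the right-hand side is computed inside $U$; by the one-sided-versus-two-sided ideal lemma (the Lemma just before the statement of split subalgebras, applied inside $U$), this is contained in $XU$, the right ideal of $U$ strongly generated by $X$.

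Next I would combine this with Theorem \ref{gen} (Li): it suffices to show that the classes $[x]$, $x \in X$, generate $\gr U$ as a differential commutative algebra, equivalently (by Theorem \ref{genideal}) that they generate $\gr U_+$ as a differential ideal of $\gr U$. Since $\gr$ is compatible with the splitting --- the decomposition $V = U \oplus M$ is by graded $\CD$-submodules and $UM \subset M$, so $\gr V = \gr U \oplus \gr M$ as differential $\gr U$-modules and $R$ induces a projection $\gr R$ onto $\gr U$ --- I would transfer the whole problem to the commutative setting: $\gr U_+ = \gr R(\gr V_+ \cap \text{stuff})$, and the hypothesis passes to $\gr U_+ \subset \gr(XV) + \gr(VX)$ inside $\gr V$. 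In $\gr V$, which is commutative and associative, $\gr(XV) = \gr(VX)$ is simply the differential ideal generated by the $[x]$; applying $\gr R$ and using that it is a $\gr U$-module map, one gets $\gr U_+ \subset (\text{differential ideal generated by } [x]) \cap \gr U$, and since each $[x] \in \gr U$ this is the differential ideal they generate inside $\gr U$. That is exactly the condition of Theorem \ref{genideal}, so $X$ strongly generates $U$.

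Alternatively, if one prefers to stay at the level of $V$ rather than passing to $\gr$, the inductive version runs as follows: take $a \in U$ homogeneous of positive degree; by hypothesis $a \in XV + VX$, so $a = \sum_i :\!x_i v_i\!: + \sum_j :\!w_j x_j\!:$ (up to derivatives of the $x$'s) with $x_i, x_j \in X$ and $v_i, w_j \in V$. Apply $R$: since $R(a) = a$ and $R$ commutes with left/right multiplication by the $x$'s, $a = \sum_i :\!x_i R(v_i)\!: + \sum_j :\!R(w_j) x_j\!:$ up to associator corrections; the associators $(x_i, R(v_i), \cdot)$ and such lie in $XV + VX$ again by \eqref{assoc}, but crucially in strictly higher Li-rank, since multiplying and differentiating raise rank. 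So modulo terms of higher rank, $a$ is expressed through $R(v_i), R(w_j) \in U$ of lower degree, to which induction applies; the rank filtration is exhausted because $\cap_n E_n(V) = 0$ and all $E_n$ meet each graded piece trivially for $n$ large.

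The main obstacle I anticipate is the bookkeeping of associators and the precise claim that $R$ ``commutes'' with the normally ordered multiplication by elements of $U$ --- the identities $R(uv) = uR(v)$, $R(vu) = vR(u)$ are for single multiplications, and iterated normally ordered products $:\!x_1(x_2(\cdots))\!:$ require care because the product is non-associative, so one must push the $R$ through one factor at a time and control the resulting associators. This is exactly where \eqref{assoc} is indispensable: it guarantees every associator correction term is again of the form $xV + Vx$, and --- this is the subtle quantitative point --- of strictly higher rank, so the induction on rank (rather than on degree alone) actually closes. Getting that rank bookkeeping airtight, rather than the commutative-algebra conclusion (which is immediate once one is in $\gr$), is the crux.
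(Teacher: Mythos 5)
Your ``alternative'' induction at the level of $V$ is essentially the paper's proof, but it is simpler than you fear. Writing $u = \sum R(r_x^i)\,\partial^i x + \partial^i x\, R(s_x^i)$ involves \emph{no} associator corrections at all: the splitting is a homomorphism of $U$-bimodules, so $R(vu)=R(v)u$ and $R(uv)=uR(v)$ hold exactly for $u\in U$, $v\in V$, and each $\partial^i x$ lies in $U$, so $R$ slides past that single multiplication on the nose. The induction is on degree alone and closes because $\1\notin X$ and $V^0=\Cset\1$ force every $x\in X$ to be homogeneous of positive degree, whence $R(r_x^i), R(s_x^i)\in U$ have degree strictly smaller than $\deg u$; no Li-rank bookkeeping and no appeal to \eqref{assoc} is needed, since strong generation admits arbitrary parenthesizations. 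The crux you identify (controlling associators and ranks) is not where the work is; the work is already done by the bimodule property of $R$.

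Your primary route through $\gr$ has a genuine gap. From $U_+\subset XV+VX$ one only gets $\gr U_+\subset\gr(XV+VX)$, and neither $\gr(XV+VX)=\gr(XV)+\gr(VX)$ nor the claim that $\gr(XV)$ is the differential ideal generated by the $[x]$ holds in general: a product $xv$ can jump in Li-rank, in which case its symbol is not $[x][v]$ and need not lie in the ideal generated by the $[x]$. The paper explicitly flags this obstruction in Section 7 ($\gr XV$ may fail to be an ideal of $\gr V$), which is precisely why Theorem \ref{Hilbert} is proved directly in $V$ rather than in $\gr V$. Likewise, the containment $U(\CD X)\subset XU$ invoked in your first paragraph is not the content of the lemma preceding the definition of split subalgebras and would itself require an argument (quasi-commutativity relates $uv$ and $vu$ only up to integrated $\lambda$-bracket terms). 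Keep your third paragraph, discard the first two, and drop the rank/associator apparatus.
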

\begin{proof}
Let $u \in U$ be a homogeneous element of positive degree. As $u \in U_+ \subset XV + VX$ there exist finitely many nonzero elements $r_x^i, s_x^i \in V$, that we may assume homogeneous without loss of generality, such that
$$u = \sum_{x\in X, i\in \Nset} r_x^i \partial^i x + \partial^i x s_x^i.$$
As $R(u) = u$, then also
$$u = \sum_{x \in X, i \in \Nset} R(r_x^i) \partial^i x + \partial^i x R(s_x^i).$$
In order to show that $u$ can be expressed as a linear combination of products of elements from $X$, it is enough to notice that $R(r_x^i), R(s_x^i)$ are homogeneous elements from $U$ of lesser degree than $u$, and proceed by induction on the degree.
\end{proof}

\subsection{(Not quite) proving that the VOA orbifold construction preserves finiteness}

Let $V$ be a simple finitely generated VOA, $G$ a reductive group acting on $V$ by automorphisms. Then both the following statements hold:
\begin{itemize}
\item $(V^G_+)) = (U))$ for some finite set $U \subset V^G_+$;
\item $(V^G_+)) = XV$ for some finite set $X \subset (V^G_+))$.
\end{itemize}
We are however not able to show none of the following increasingly weaker statements
\begin{itemize}
\item $(V^G_+)) = XV$ for some finite set $X \subset V^G_+$,
\item $V^G_+ V = XV$ for some finite set $X \subset V^G_+$,
\item $V^G_+ V + V V^G_+ = XV + VX$ for some finite set $X \subset V^G_+$,
\end{itemize}
which would suffice to apply Theorem \ref{Hilbert} to ensure finiteness of $V^G$. 
Such statements depend on a stronger Noetherianity property than we are able to show.

Notice that the above proof of right Noetherianity of a simple finitely generated VOA requires considering nonzero associators, thus resulting in a strictly noncommutative statement. Noncommutative VOAs are however typically nonassociative, and this may prevent subspaces of the form $XV$ from being right ideals.

It is not clear how one should proceed to adapt Hilbert's strategy to the VOA setting. I would like to list a few (bad and good) facts one must necessarily cope with.
\begin{itemize}
\item $XV$ can fail to be an ideal of $V$.
\item Furthermore, it is easy to construct examples of $X \subset V$ such that $\gr XV$ is not an ideal of $V$. The ideal property is likely to fail for subspaces $\gr (XV + VX)$ too.
\end{itemize}
However the proof of many statements does not require the full strenght of ideals:
\begin{itemize}
\item $A \subset B, \gr A = \gr B \implies A = B$ holds for subspaces, not just ideals.
\item If $\1 \in \llbr a, b\rrbr$, then $aV + Vb$ contains some $\partial^N V$. However this does not seem to guarantee fullness.
\item If $X \subset V$ is non-empty, then $(XV)V$ may fail to be an ideal, but is however full.
\item If $A \subset V$ is a subspace such that $\gr A$ contains $\gr E_n(V)$, then $A$ contains $E_n(V)$.
\end{itemize}
It is also possible that strong finite generation of subspaces of the form $XV$ may fail in general, but can be proved in the special case of $X = V^G_+$.\\

\noindent{\bf Problem:} understand what conditions ensure that a subspace $XV + VX$ contain a nonzero ideal.

\end{document}